\numberwithin{equation}{section} 
\newenvironment{pdeq}{ \left\{ \begin{aligned}}{\end{aligned}\right.}
\newcommand{\np}[1]{(#1)}
\newcommand{\nb}[1]{[#1]}
\newcommand{\bp}[1]{\big(#1\big)}
\newcommand{\bb}[1]{\big[#1\big]}
\newcommand{\Bp}[1]{\bigg(#1\bigg)}
\newcommand{\Bb}[1]{\bigg[#1\bigg]}
\newcommand{\calp}{{\mathcal P}}
\newcommand{\calt}{{\mathcal T}}
\newcommand{\R}{\mathbb{R}}
\newcommand{\C}{\mathbb{C}}
\newcommand{\Z}{\mathbb{Z}}
\newcommand{\N}{\mathbb{N}}
\newcommand{\embeds}{\hookrightarrow}
\newcommand{\TD}{\operatorname{Tr}_{D}}
\newcommand{\TDN}{\operatorname{Tr}_{0}}
\newcommand{\TDNrinv}{\operatorname{R}_{0}}
\newcommand{\TN}{\operatorname{Tr}_N}
\newcommand{\ADdiffopru}{\left(a\Delta - \partial_t\right)\left(\partial_{t}^2\uvel-c^2\Delta\uvel - b\partial_t\Delta\uvel\right)}
\newcommand{\ra}{\rightarrow}
\newcommand{\set}[1]{\ensuremath{\{#1\}}}
\newcommand{\setc}[2]{\ensuremath{\{#1\ \lvert\ #2\}}}
\newcommand{\setcL}[2]{\ensuremath{\biggl\{#1\ \lvert\ #2\biggr\}}}
\newcommand{\closure}[2]{\overline{#1}^{#2}}
\newcommand{\proj}{\calp}
\newcommand{\projcompl}{\calp_\bot}
\newcommand{\quotientmap}{\pi}
\newcommand{\torus}{{\mathbb T}}
\newcommand{\grad}{\nabla}
\newcommand{\dx}{{\mathrm d}x}
\newcommand{\ds}{{\mathrm d}s}
\newcommand{\dt}{{\mathrm d}t}
\newcommand{\dS}{{\mathrm d}S}
\newcommand{\TDR}{\mathscr{S^\prime}}
\newcommand{\FT}{\mathscr{F}}
\newcommand{\iFT}{\mathscr{F}^{-1}}
\newcommand{\PR}{\mathcal{P}}
\newcommand{\oPR}{\mathcal{P}_\bot}
\newcommand{\mmultiplier}{m}
\newcommand{\norm}[1]{\lVert#1\rVert}
\newcommand{\normL}[1]{\Bigl\lVert#1\Bigr\rVert}
\newcommand{\snorm}[1]{{\lvert #1 \rvert}}
\newcommand{\opnorm}[1]{{\vert\kern-0.25ex\vert\kern-0.25ex\vert #1 \vert\kern-0.25ex\vert\kern-0.25ex\vert}}
\newcommand{\WSR}[2]{W^{#1,#2}}
\newcommand{\CR}[1]{C^{#1}}  
\newcommand{\LR}[1]{L^{#1}}
\newcommand{\CRi}{\CR \infty}
\newcommand{\PS}[1]{\mathbb{X}_{\mathrm{per}}^{#1}}
\newcommand{\PSnp}[1]{\mathbb{X}^{#1}}
\newcommand{\TRD}[1]{\operatorname{T}^p_{\mathrm{per},D_{#1}}}
\newcommand{\TRN}[1]{\operatorname{T}^p_{\mathrm{per},N_{#1}}}
\newcommand{\LRper}[1]{L^{#1}_{\mathrm{per}}}
\newcommand{\WSRper}[2]{W^{#1,#2}_{\mathrm{per}}} 
\newcommand{\CRper}{\CR{}_{\mathrm{per}}}
\newcommand{\CRiper}{\CR{\infty}_{\mathrm{per}}}
\newcommand{\wvel}{w}
\newcommand{\uvel}{u}
\newcommand{\us}{\uvel_s}
\newcommand{\up}{\uvel_p}
\newcommand{\tin}{\text{in }}
\newcommand{\tif}{\text{if }}
\newcommand{\ton}{\text{on }}
\renewcommand{\epsilon}{\varepsilon}
\newcommand{\tay}{\calt}
\newcommand{\per}{\tay}
\newcommand{\newCCtr}[2][d]{
\newcounter{#2}\setcounter{#2}{0}
\expandafter\xdef\csname kyedtheconst#2\endcsname{#1}
}
\newcommand{\Cc}[2][nolabel]{
\stepcounter{#2}
\expandafter\ensuremath{\csname kyedtheconst#2\endcsname_{\arabic{#2}}}
\ifthenelse{\equal{#1}{nolabel}}
{}
{\expandafter\xdef\csname kyedconst#1\endcsname
{\expandafter\ensuremath{\csname kyedtheconst#2\endcsname_{\arabic{#2}}}}}
}
\newcommand{\Ccn}[2][nolabel]{
\expandafter\ensuremath{\csname kyedtheconst#2\endcsname}
\ifthenelse{\equal{#1}{nolabel}}
{}
{\expandafter\xdef\csname kyedconst#1\endcsname
{\expandafter\ensuremath{\csname kyedtheconst#2\endcsname}}}
}
\newcommand{\CcSetCtr}[2]{
\setcounter{#1}{#2}
}
\newcommand{\Cclast}[1]{
\expandafter\ensuremath{\csname kyedtheconst#1\endcsname_{\arabic{#1}}}
}
\newcommand{\Ccllast}[1]{
\addtocounter{#1}{-1}
\expandafter\ensuremath{\csname kyedtheconst#1\endcsname_{\arabic{#1}}}
\addtocounter{#1}{1}
}
\newcommand{\const}[1]{
\expandafter{\ifcsname kyedconst#1\endcsname
  \csname kyedconst#1\endcsname
\else
  \errmessage{Undefined Kyedconstant #1.}%
\fi}
}
\theoremstyle{plain}
\newtheorem{thm}{Theorem}[section]
\newtheorem{lem}[thm]{Lemma}
\theoremstyle{remark}
\begin{document}
\title{Nonlinear acoustics: Blackstock-Crighton equations with a periodic forcing term}

\author{
Aday Celik\\ 
Fachbereich Mathematik\\
Technische Universit\"at Darmstadt\\
Schlossgartenstr. 7, 64289 Darmstadt, Germany\\
Email: {\texttt{celik@mathematik.tu-darmstadt.de}}
\and
Mads Kyed\\ 
Fachbereich Mathematik\\
Technische Universit\"at Darmstadt\\
Schlossgartenstr. 7, 64289 Darmstadt, Germany\\
Email: {\texttt{kyed@mathematik.tu-darmstadt.de}}
}

\date{\today}
\maketitle

\begin{abstract}
The Blackstock-Crighton equations describe the motion of a viscous, heat-conducting, compressible fluid. They are used as models for acoustic wave propagation in a medium in which both nonlinear and dissipative effects are taken into account. In this article, a mathematical analysis of the Blackstock-Crighton equations with a time-periodic forcing term is carried out. For arbitrary time-periodic data (sufficiently restricted in size) it is shown that a time-periodic solution of the same period always exists. This implies that the dissipative effects are sufficient to avoid resonance within the Blackstock-Crighton models.
The equations are considered in a three-dimensional bounded domain with both non-homogeneous Dirichlet and Neumann boundary values. Existence of a solution is obtained via a fixed-point argument based on appropriate a priori estimates for the linearized equations.
\end{abstract}

\noindent\textbf{MSC2010:} Primary 35Q35, 76N10, 76D33, 35B10, 35B34.\\
\noindent\textbf{Keywords:} Blackstock-Crighton, periodic solutions, resonance.

\newCCtr[C]{C}
\newCCtr[M]{M}
\newCCtr[B]{B}
\newCCtr[\epsilon]{eps}
\CcSetCtr{eps}{-1}
\newCCtr[c]{c}
\let\oldproof\proof
\def\proof{\CcSetCtr{c}{-1}\oldproof}

\section{Introduction}

The motion of a viscous, heat-conducting fluid is governed by mass conservation, momentum conservation, energy conservation, and a thermodynamic equation of state. The compressible Navier-Stokes equations describe the conservation of mass and momentum when viscous effects are taken into account.
The Kirchhoff-Fourier equations describe the conservation of energy when 
heat-conducting effects are taken into account. Using the equation of state of an ideal fluid, and assuming the flow is irrotational, \textsc{Blackstock} \cite{Bla63} 
eliminated all but one dependent variable from the Navier-Stokes and Kirchhoff-Fourier equations to obtain  
the following model for a viscous, heat-conducting fluid:
\begin{align}\label{BCKE}
\left(a\Delta - \partial_t\right)\left(\partial_{t}^2\uvel-c^2\Delta\uvel - b\partial_t\Delta\uvel\right) - 
\partial_t^2\bp{\frac{1}{c^2}\frac{B}{2A}\np{\partial_t\uvel}^2 + \snorm{\grad\uvel}^2} &= f.\tag{BCK}
\end{align}
Here, $\uvel$ denotes the potential of the fluids velocity field and $f$ a forcing term.  
The constant $a$ is the heat conductivity of the fluid, $b$ the diffusivity of sound, $c$ the speed of sound, and $B/A$ is called the parameter of nonlinearity.
The equation \eqref{BCKE} is referred to as the Blackstock-Crighton-Kuznetsov equation and is used as a model for acoustic wave propagation in a medium in which both nonlinear and dissipative effects are taken into account. 
If local nonlinear effects are neglected, one is lead to the  
Blackstock-Crighton-Westervelt equation
\begin{align}\label{BCWE}
\left(a\Delta - \partial_t\right)\left(\partial_{t}^2\uvel-c^2\Delta\uvel - b\partial_t\Delta\uvel\right) - \partial_t^2\Bp{\frac{1}{c^2}\bp{1 + \frac{B}{2A}}\np{\partial_t\uvel}^2} &= f.\tag{BCW}
\end{align}
If also the dissipative effects are neglected, then the model reduces to the classical wave equation.

In the present article we investigate if the dissipative effects present in \eqref{BCKE} and \eqref{BCWE} are sufficient to avoid resonance effects. If one considers the classical (hyperbolic) wave equation, where dissipative effects are completely neglected, 
a time-periodic source term would inevitable lead to resonance, that is, an unbounded solution. 
Our investigation of \eqref{BCKE} and \eqref{BCWE} will show, by establishing existence of a time-periodic solution
for arbitrary time-periodic data $f$ (sufficiently restricted in size), that this is not the case for 
the Blackstock-Crighton equations \eqref{BCKE} and \eqref{BCWE}.
We may therefore conclude that the dissipative effects of viscosity and heat conduction 
in the Blackstock-Crighton framework constitute a sufficient energy absorption mechanism to avoid resonance.

The initial-value problems corresponding to the Blackstock-Crighton equations \eqref{BCKE} and \eqref{BCWE} have 
been investigated recently by \textsc{Brunnhuber} and \textsc{Kaltenbacher} \cite{BK14}, \textsc{Brunnhuber} \cite{Bru15} 
(in particular we refer to \cite[Section 2]{Bru15} for a derivation of \eqref{BCKE}), and
\textsc{Brunnhuber} and \textsc{Meyer} \cite{BM16}. To our knowledge, the investigation of time-periodic solutions to the Blackstock-Crighton equations 
in a setting of time-periodic data is new. The Kuznetsov equation, which is a simpler model for nonlinear acoustic wave propagation, was investigated in a time-periodic setting by the present authors in \cite{CelikKyed_nweqwd}. We shall employ the results from \cite{CelikKyed_nweqwd} in the following. 
In addition to the articles mentioned above, we would like to draw the readers attention to the recent work \cite{Tani} of \textsc{Tani}, in which a new model for nonlinear wave propagation similar to the Blackstock-Crighton model is derived and analyzed. 

We consider \eqref{BCKE} and \eqref{BCWE} with inhomogeneous Dirichlet and Neumann boundary conditions.
We shall work in a setting of time-periodic functions and therefore take the whole of $\R$ as time-axis. We let $\Omega\subset\R^3$ denote a spatial domain. In the following, $(t,x)\in\R\times\Omega$ will always denote a time-variable $t$ and spatial variable $x$, respectively.   
The Blackstock-Crighton equation (in generalized form) with Dirichlet boundary condition then reads
\begin{align}\label{BCD}
\begin{pdeq}
\left(a\Delta - \partial_t\right)\left(\partial_{t}^2\uvel-c^2\Delta\uvel - b\partial_t\Delta\uvel\right) - \partial_t^2\bp{k\np{\partial_t\uvel}^2 + s\snorm{\grad\uvel}^2}&= f && \tin\R\times\Omega, \\
\bp{\uvel, \Delta\uvel} &= \bp{g, h} && \ton\R\times\partial\Omega,
\end{pdeq}\tag{BCD}
\end{align}
where we have used the same notation $k:=\frac{1}{c^2}\big((1-s)+\frac{B}{2A}\big)$ and $s\in\{0, 1\}$ as in \cite{Bru15}. 
The corresponding Neumann problem reads
\begin{align}\label{BCN}
\begin{pdeq}
\left(a\Delta - \partial_t\right)\left(\partial_{t}^2\uvel-c^2\Delta\uvel - b\partial_t\Delta\uvel\right) - \partial_t^2\bp{k\np{\partial_t\uvel}^2 + s\snorm{\grad\uvel}^2}&= f && \tin\R\times\Omega, \\
\bp{\partial_\nu\uvel, \partial_\nu{\Delta\uvel}} &= \bp{g, h} && \ton\R\times\partial\Omega.
\end{pdeq}\tag{BCN}
\end{align}
Here $f\colon\R\times\Omega\to\R$ and $g, h\colon\R\times\partial\Omega\to\R$ are given, and $\uvel\colon\R\times\Omega\to\R$ is the unknown. 

We shall consider data that are time-periodic with the period $\per>0$, that is, functions satisfying
\begin{align*}
\forall t\in\R:\quad v(t+\per,\cdot)=v(t,\cdot).
\end{align*}
As the main result in this article, we show for given $\per$-time-periodic data $f$, $g$ and $h$ in appropriate function spaces and sufficiently restricted in size 
that there exists a time-periodic solution $\uvel$ to \eqref{BCD} and \eqref{BCN}. 
In the case of Dirichlet boundary conditions, the result can be stated as follows (the subscript $per$ indicates that a 
function space consists of $\per$-time-periodic functions; see Section \ref{FunctionSpacesSection}):
\begin{thm}\label{max_regBCD}
Let $\Omega\subset\R^3$ be a bounded domain with a $C^4$-smooth boundary and $p\in (\frac{5}{2}, \infty)$. There is an $\varepsilon>0$ such that for all 
$f\in\LRper{p}(\R; \LR{p}\left(\Omega\right))$, $g\in\TRD{1}(\R\times\partial\Omega)$ and $h\in\TRD{2}(\R\times\partial\Omega)$ satisfying 
\begin{align*}
\norm{f}_{\LRper{p}(\R; \LR{p}(\Omega))} + \norm{g}_{\TRD{1}(\R\times\partial\Omega)} + \norm{h}_{\TRD{2}(\R\times\partial\Omega)} \leq \varepsilon
\end{align*}
there is a solution
\begin{align*}
\uvel\in\WSRper{3}{p}(\R; \LR{p}(\Omega)) \cap \WSRper{1}{p}\left(\R; \WSR{4}{p}(\Omega)\right)
\end{align*}
to \eqref{BCD}.
\end{thm}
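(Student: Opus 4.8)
The plan is to recast \eqref{BCD} as a fixed-point problem for the nonlinear term and solve it via the contraction mapping principle, using maximal-regularity estimates for the linearized operator $\left(a\Delta - \partial_t\right)\left(\partial_{t}^2-c^2\Delta - b\partial_t\Delta\right)$ on the time-periodic torus. First I would homogenize the boundary data: choose a lifting $\uvel_0$ depending only on $g$ and $h$ (for instance by solving two decoupled elliptic-type problems so that $\uvel_0 = g$ and $\Delta\uvel_0 = h$ on $\R\times\partial\Omega$, with the correct regularity $\uvel_0\in\WSRper{3}{p}(\R;\LR{p}(\Omega))\cap\WSRper{1}{p}(\R;\WSR{4}{p}(\Omega))$, using that $g\in\TRD{1}$, $h\in\TRD{2}$ are exactly the trace spaces of this solution class). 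Writing $\uvel = \uvel_0 + w$, the problem for $w$ has homogeneous boundary conditions, a modified right-hand side $\tf := f - \left(a\Delta - \partial_t\right)\left(\partial_{t}^2\uvel_0-c^2\Delta\uvel_0 - b\partial_t\Delta\uvel_0\right) + \partial_t^2\bp{k(\partial_t\uvel_0)^2 + s\snorm{\grad\uvel_0}^2}$, which lies in $\LRper{p}(\R;\LR{p}(\Omega))$ with norm controlled by $\varepsilon$, and the nonlinearity is now evaluated at $\uvel_0 + w$.

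Next I would invoke the linear theory — the analogue for \eqref{BCD} of the maximal $L^p$ regularity result that the paper has surely established in an earlier section (and which underlies the statement of Theorem \ref{max_regBCD} itself): for every $F\in\LRper{p}(\R;\LR{p}(\Omega))$ there is a unique $w = \linopr^{-1}F$ in $\WSRper{3}{p}(\R;\LR{p}(\Omega))\cap\WSRper{1}{p}(\R;\WSR{4}{p}(\Omega))$ solving the linearized homogeneous-boundary problem, with $\norm{w}_{\maxregspacelinns{}} \leq \Cc{C}\,\norm{F}_{\LRper{p}(\R;\LR{p}(\Omega))}$. The time-periodic (rather than initial-value) character is essential here: on the torus the relevant symbol is invertible because the dissipative terms $a\Delta\partial_t^2$ and $b\partial_t\Delta$ prevent the symbol from vanishing — this is precisely the absence of resonance. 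I would then define the map $\calf(w) := \linopr^{-1}\bp{\tf + \partial_t^2\bp{k(\partial_t(\uvel_0+w))^2 + s\snorm{\grad(\uvel_0+w)}^2}}$ on the ball $B_R := \setc{w\in\maxregspacelinns{}}{\norm{w}\leq R}$.

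The crux is the nonlinear estimate: I must show $\calf$ maps $B_R$ into itself and is a contraction for $R,\varepsilon$ small. This reduces to bounding $\partial_t^2(\partial_t v\,\partial_t v)$ and $\partial_t^2(\grad v\cdot\grad v)$ in $\LRper{p}(\R;\LR{p}(\Omega))$ by $\norm{v}_{\maxregspacelinns{}}^2$ (and the corresponding bilinear/difference version). Expanding the $\partial_t^2$ produces terms like $\partial_t^3 v\cdot\partial_t v$, $(\partial_t^2 v)^2$, $\partial_t^3 v\cdot\grad v$, $\partial_t^2\grad v\cdot\partial_t\grad v$, $\grad\partial_t^2 v\cdot\grad v$, etc.; each factor must be placed in an appropriate $\LR{q}$-space via the embeddings available for $w\in\WSRper{3}{p}(\R;\LR{p})\cap\WSRper{1}{p}(\R;\WSR{4}{p})$ (mixed Sobolev embeddings in the space-time cylinder $\R\times\Omega$, anisotropic in the time and space directions), with the condition $p>\frac{5}{2}$ being exactly what is needed to control the worst product — presumably a term where one factor sits at the top of the regularity scale and the other must be absorbed into $\LR{\infty}$ or close to it, which requires the Hölder-type embedding $\WSR{1}{p}(\R;\WSR{4}{p}(\Omega))\embeds\CRper(\R\times\overline\Omega)$ and $\WSR{3}{p}(\R;\LR{p}(\Omega))$ control of the time derivatives. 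The structurally awkward point is that the nonlinearity is genuinely quadratic of the \emph{second} order in a third-order-in-time equation, so the loss of two time derivatives on the right-hand side is only barely compensated by the gain of the linear solution operator; verifying that all resulting products land in $\LR{p}_tL^p_x$ with a clean $\norm{\cdot}^2$ bound is the main technical obstacle. Once the bilinear estimate $\norm{\partial_t^2(k\,\partial_t v_1\partial_t v_2 + s\,\grad v_1\cdot\grad v_2)}_{\LRper{p}(\R;\LR{p})} \leq \Cc{C}\norm{v_1}_{\maxregspacelinns{}}\norm{v_2}_{\maxregspacelinns{}}$ is in hand, the standard computation gives $\norm{\calf(w)}\leq \Cc{C}\varepsilon + \Cc{C}(R+\varepsilon)^2$ and $\norm{\calf(w_1)-\calf(w_2)}\leq \Cc{C}(R+\varepsilon)\norm{w_1-w_2}$, so choosing $R\sim\varepsilon$ and $\varepsilon$ small yields a self-map and contraction; its fixed point $w$ gives $\uvel=\uvel_0+w$ in the asserted class solving \eqref{BCD}. \qed
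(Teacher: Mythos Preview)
Your overall strategy---linear maximal regularity plus contraction mapping on a small ball, with quadratic estimates for the nonlinearity---is the same as the paper's, and the argument would go through. The organizational difference is in the decomposition. You split $\uvel=\uvel_0+w$ with $\uvel_0$ a boundary lifting and $w$ solving a homogeneous-boundary problem, then invoke a full linear solution operator $\linopr^{-1}$ on all of $\LRper{p}(\R;\LR{p}(\Omega))$. The paper instead splits $\uvel=\us+\up$ into steady-state and purely oscillatory parts via the projections $\proj,\projcompl$: the steady part $\us$ solves the elliptic bi-Laplacian problem $-ac^2\Delta^2\us=\proj f$ with data $(\proj g,\proj h)$ (Lemma \ref{DSBD}), while the oscillatory part is handled by the homeomorphism $\operatorname{A_D}$ of Theorem \ref{homeo}, which is established \emph{only} on the complementary subspace $\projcompl\PS{p}$. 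Your $\linopr^{-1}$ is not directly provided by the paper; it would have to be assembled from Theorem \ref{homeo} and Lemma \ref{DSBD} together, so in effect you are hiding the same steady/oscillatory decomposition inside the linear theory rather than exposing it at the level of the nonlinear problem. The paper's split has the minor advantage that the cross-term $2s\,\grad\us\cdot\partial_t^2\grad\up$ is isolated explicitly and estimated via $\norm{\grad\us}_\infty\lesssim\varepsilon$ (Sobolev on $\WSR{4}{p}(\Omega)$), whereas in your formulation this term is buried in the expansion of the nonlinearity at $\uvel_0+w$; either way it is linear in the unknown and small. One small slip: in your list of product terms, $\partial_t^3 v\cdot\grad v$ does not actually occur in $\partial_t^2\bp{k(\partial_t v)^2+s\snorm{\grad v}^2}$; the correct terms are $\partial_t v\,\partial_t^3 v$, $(\partial_t^2 v)^2$, $\partial_t\grad v\cdot\partial_t\grad v$, and $\grad v\cdot\partial_t^2\grad v$, exactly as in Lemma \ref{nonlin}.
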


A solution to the Neumann problem \eqref{BCN} only exists when the data satisfies certain compatibility conditions. More precisely, we obtain:
\begin{thm}\label{max_regBCN}
Let $\Omega$ and $p$ be as in Theorem \ref{max_regBCD}. There is an $\varepsilon>0$ such that for all $f\in\LRper{p}(\R; \LR{p}\left(\Omega\right))$, $g\in \TRN{1}(\R\times\partial\Omega)$ and $h\in \TRN{2}(\R\times\partial\Omega)$ satisfying 
\begin{align*}
\norm{f}_{\LRper{p}(\R; \LR{p}(\Omega))} + \norm{g}_{\TRN{1}(\R\times\partial\Omega)} + \norm{h}_{\TRN{2}(\R\times\partial\Omega)} \leq \varepsilon
\end{align*}
and
\begin{align*}
\int_0^\per\int_\Omega f \,\dx\dt + ac^2\int_0^\per\int_{\partial\Omega} h \,\dS\dt = 0 
\end{align*}
there is a solution 
\begin{align*}
\uvel\in \WSRper{3}{p}(\R; \LR{p}(\Omega)) \cap \WSRper{1}{p}\left(\R; \WSR{4}{p}(\Omega)\right)
\end{align*}
to \eqref{BCN}.
\end{thm}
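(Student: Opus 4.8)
The plan is to solve \eqref{BCN} by the same contraction-mapping scheme that establishes the Dirichlet case, Theorem \ref{max_regBCD}; the only genuinely new features are the scalar compatibility condition and the fact that \eqref{BCN} determines $\uvel$ only up to an additive constant. Write $X := \WSRper{3}{p}(\R;\LR{p}(\Omega)) \cap \WSRper{1}{p}(\R;\WSR{4}{p}(\Omega))$ for the solution space and set $X_0 := \setc{\uvel\in X}{\int_0^\per\int_\Omega\uvel\,\dx\dt = 0}$. Since the kernel of $\left(a\Delta - \partial_t\right)\left(\partial_{t}^2 - c^2\Delta - b\partial_t\Delta\right)$ under Neumann boundary conditions consists exactly of the spatial constants, one has $X = X_0\oplus\R$, and the linear theory of the preceding section provides a bounded operator assigning to admissible data the unique solution lying in $X_0$. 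For $\uvel\in X_0$ let $\calf(\uvel)\in X_0$ be the normalized solution of the \emph{linear} Neumann problem
\begin{align*}
\begin{pdeq}
\left(a\Delta - \partial_t\right)\left(\partial_{t}^2 v - c^2\Delta v - b\partial_t\Delta v\right) &= f + \partial_t^2\bp{k\np{\partial_t\uvel}^2 + s\snorm{\grad\uvel}^2} && \tin\R\times\Omega,\\
\bp{\partial_\nu v, \partial_\nu\Delta v} &= \bp{g,h} && \ton\R\times\partial\Omega.
\end{pdeq}
\end{align*}
Because $\partial_t^2\bp{k\np{\partial_t\uvel}^2 + s\snorm{\grad\uvel}^2}$ is unchanged when a constant is added to $\uvel$, the nonlinearity descends to $X_0$ and a fixed point of $\calf$ is a solution to \eqref{BCN}.

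I would first check that $\calf$ is well defined, i.e.\ that the right-hand side $f + \partial_t^2\bp{k\np{\partial_t\uvel}^2 + s\snorm{\grad\uvel}^2}$ meets the compatibility condition required by the linear theory. This is immediate from $\per$-time-periodicity: $\int_0^\per\partial_t^2\bp{k\np{\partial_t\uvel}^2 + s\snorm{\grad\uvel}^2}\,\dt = 0$ for almost every $x\in\Omega$, whence
\begin{align*}
\int_0^\per\int_\Omega \Bb{f + \partial_t^2\bp{k\np{\partial_t\uvel}^2 + s\snorm{\grad\uvel}^2}}\,\dx\dt + ac^2\int_0^\per\int_{\partial\Omega} h\,\dS\dt = \int_0^\per\int_\Omega f\,\dx\dt + ac^2\int_0^\per\int_{\partial\Omega} h\,\dS\dt = 0
\end{align*}
by assumption. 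The linear maximal-regularity estimate for the Neumann problem then yields a constant $C_1>0$, independent of $\uvel,f,g,h$, with
\begin{align*}
\norm{\calf(\uvel)}_{X} \le C_1\Bp{\norm{f}_{\LRper{p}(\R;\LR{p}(\Omega))} + \normL{\partial_t^2\bp{k\np{\partial_t\uvel}^2 + s\snorm{\grad\uvel}^2}}_{\LRper{p}(\R;\LR{p}(\Omega))} + \norm{g}_{\TRN{1}(\R\times\partial\Omega)} + \norm{h}_{\TRN{2}(\R\times\partial\Omega)}}.
\end{align*}

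Next come the nonlinear estimates. Expanding $\partial_t^2$ by the Leibniz rule and combining H\"older's inequality with the Sobolev embeddings available for elements of $X$ — this is precisely where the restriction $p>\tfrac52$ enters, and the computation is the same as for the Kuznetsov nonlinearity in \cite{CelikKyed_nweqwd} and for \eqref{BCD} — I expect constants $C_2,C_3>0$ such that, for all $\uvel,\uvel_1,\uvel_2\in X_0$,
\begin{align*}
\normL{\partial_t^2\bp{k\np{\partial_t\uvel}^2 + s\snorm{\grad\uvel}^2}}_{\LRper{p}(\R;\LR{p}(\Omega))} &\le C_2\norm{\uvel}_{X}^2,\\
\normL{\partial_t^2\bp{k\np{\partial_t\uvel_1}^2 + s\snorm{\grad\uvel_1}^2} - \partial_t^2\bp{k\np{\partial_t\uvel_2}^2 + s\snorm{\grad\uvel_2}^2}}_{\LRper{p}(\R;\LR{p}(\Omega))} &\le C_3\bp{\norm{\uvel_1}_{X}+\norm{\uvel_2}_{X}}\norm{\uvel_1-\uvel_2}_{X}.
\end{align*}
Combining these with the linear bound, I restrict $\calf$ to the closed ball $\setc{\uvel\in X_0}{\norm{\uvel}_X\le 2C_1\varepsilon}$ and choose $\varepsilon>0$ so small that $4C_1^2(C_2+C_3)\varepsilon<1$; a short computation then shows $\calf$ maps this ball into itself and is a strict contraction on it. The contraction-mapping principle supplies $\uvel\in X_0$ with $\calf(\uvel)=\uvel$, which is the asserted solution to \eqref{BCN} whenever $\norm{f}_{\LRper{p}(\R;\LR{p}(\Omega))}+\norm{g}_{\TRN{1}(\R\times\partial\Omega)}+\norm{h}_{\TRN{2}(\R\times\partial\Omega)}\le\varepsilon$.

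The main obstacle is not this fixed-point argument but the linear theory it relies on: establishing maximal $\LR{p}$-regularity for the third-order-in-time, fourth-order-in-space operator $\left(a\Delta - \partial_t\right)\left(\partial_{t}^2 - c^2\Delta - b\partial_t\Delta\right)$ under the Neumann conditions $\bp{\partial_\nu\uvel,\partial_\nu\Delta\uvel}=\bp{g,h}$, and proving that the single scalar relation $\int_0^\per\int_\Omega f\,\dx\dt + ac^2\int_0^\per\int_{\partial\Omega}h\,\dS\dt = 0$ — which one reads off by integrating the equation over $\Omega\times(0,\per)$, applying the divergence theorem, and using $\per$-periodicity together with the prescribed traces $\partial_\nu\uvel=g$ and $\partial_\nu\Delta\uvel=h$ — is not merely necessary but also sufficient for solvability. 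Granting that linear result (and its Dirichlet analogue behind Theorem \ref{max_regBCD}), the reduction of the nonlinear problem to the small-data fixed-point step sketched above is routine.
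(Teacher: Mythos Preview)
Your proposal is correct and follows the same contraction-mapping strategy as the paper, which simply says ``Analogous to the proof of Theorem \ref{max_regBCD}.'' The only organizational difference is that the paper, following its proof of Theorem \ref{max_regBCD}, first splits $\uvel=\us+\up$ into steady and purely oscillatory parts via the projections $\proj,\projcompl$: the steady part $\us$ is obtained once and for all from the bi-Laplacian Neumann problem (Lemma \ref{NSBD}), which is exactly where the compatibility condition $\int_\Omega f_s\,\dx+ac^2\int_{\partial\Omega}h_s\,\dS=0$ is consumed, and the fixed-point iteration is then run only on $\up\in\projcompl\PS{p}$ using $\operatorname{A_N}^{-1}$ from Theorem \ref{homeo}, with the cross term $2s\grad\us\cdot\partial_t^2\grad\up$ appearing explicitly. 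You instead bundle the two linear solvers into a single solution operator on $X_0$ and iterate on the whole of $\uvel$; this is equivalent, since $\proj\bb{\partial_t^2(\cdot)}=0$ forces the steady component of your map $\calf$ to be constant in $\uvel$.

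Your closing concern about the linear theory is therefore misplaced: the required maximal $\LR{p}$-regularity for the Neumann problem is precisely Theorem \ref{homeo} (purely oscillatory part) combined with Lemma \ref{NSBD} (steady part), both already established in Section \ref{lin}, and the scalar compatibility relation is exactly the solvability condition in Lemma \ref{NSBD}. Nothing further needs to be proved.
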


Our proofs are based on a priori $\LR{p}$-estimates for the corresponding linearizations 
\begin{align}\label{LinBCD}
\begin{pdeq}
\left(a\Delta - \partial_t\right)\left(\partial_{t}^2\uvel-c^2\Delta\uvel - b\partial_t\Delta\uvel\right) &= f && \tin\R\times\Omega, \\
\bp{\uvel, \Delta\uvel} &= \bp{g, h} && \ton\R\times\partial\Omega,
\end{pdeq}
\end{align}
and
\begin{align}\label{LinBCN}
\begin{pdeq}
\left(a\Delta - \partial_t\right)\left(\partial_{t}^2\uvel-c^2\Delta\uvel - b\partial_t\Delta\uvel\right) &= f && \tin\R\times\Omega, \\
\bp{\partial_\nu\uvel, \partial_\nu{\Delta\uvel}} &= \bp{g, h} && \ton\R\times\partial\Omega,
\end{pdeq}
\end{align}
of \eqref{BCD} and \eqref{BCN}, respectively, and an application of the contraction mapping principle. 
Our analysis relies on a decomposition of \eqref{LinBCD} and \eqref{LinBCN} into a coupled system consisting of the time-periodic heat equation and 
the time-periodic Kuznetsov equation. The time-periodic heat equation was investigate in \cite{KyedSauer_Heat}, and 
the time-periodic Kuznetsov equation in \cite{CelikKyed_nweqwd}. In the following, we shall employ the results obtained in these two articles.

\section{Function Spaces}\label{FunctionSpacesSection}

In the following, $\Omega\subset\R^3$ shall always denote a three-dimensional domain with a $\CR{4}$-smooth boundary.
The outer normal on $\partial\Omega$ is denoted by $\nu$.
Points in $\R\times\Omega$ are generally denoted by $(t,x)$, with $t$ being referred to as time, and $x$ as the spatial variable. 
A time period $\per>0$ remains fixed.

Classical Lebesgue and Sobolev spaces are denoted by $\LR{p}(\Omega)$ and $\WSR{k}{p}(\Omega)$, respectively. We write $\norm{\cdot}_p$ and $\norm{\cdot}_{k,p}$ instead of $\norm{\cdot}_{\LR{p}(\Omega)}$ and $\norm{\cdot}_{\WSR{k}{p}(\Omega)}$. 

For a Lebesgue or Sobolev space $E(\Omega)$, we define the space of smooth $\per$-time-periodic vector-valued functions by 
\begin{align*}
\CRiper\bp{\R; E(\Omega)}:= \setc{f\in\CRi\bp{\R; E(\Omega)}}{f(t+\per, x) = f(t, x) }.
\end{align*}
For $p\in(1, \infty)$, we let
\begin{align}
&\norm{f}_{\LRper{p}\left(\R; E(\Omega)\right)}:=\Bp{\frac{1}{\per}\int_0^\per \norm{f(t)}^p_{E(\Omega)} \,\dt}^{\frac{1}{p}},\label{LppernormDef} \\
&\norm{f}_{\WSRper{k}{p}\left(\R; E(\Omega)\right)}:=\Bp{\sum_{\alpha = 0}^k \norm{\partial_t^\alpha f}_{\LRper{p}\left(\R; E(\Omega)\right)}^p}^{\frac{1}{p}}.\label{WSRpernormDef}
\end{align}
As one may verify, $\norm{\cdot}_{\LRper{p}\left(\R; E(\Omega)\right)}$ and $\norm{\cdot}_{\WSRper{k}{p}\left(\R; E(\Omega)\right)}$ define norms on $\CRiper\bp{\R; E(\Omega)}$. 
We put
\begin{align}
&\LRper{p}\bp{\R; E(\Omega)}:= \closure{\CRiper\bp{\R; E(\Omega)}}{\norm{\cdot}_{\LRper{p}\left(\R; E(\Omega)\right)}}.\label{DefLpperSpace}
\end{align}
If no confusion can arise, we write $\norm{\cdot}_p$ instead of $\norm{\cdot}_{\LRper{p}(\R; \LR{p}(\Omega))}$.
We also introduce Sobolev spaces of vector-valued time-periodic functions:
\begin{align}
&\WSRper{k}{p}\bp{\R; E(\Omega)} := \closure{\CRiper\bp{\R; E({\Omega}{})}}{\norm{\cdot}_{\WSRper{k}{p}\left(\R; E(\Omega)\right)}}.\label{DefWSRperSpace}
\end{align}
Corresponding Sobolev-Slobodecki\u{\i} spaces are defined in the usual way by real interpolation $(k\in\N_0,\alpha\in(0,1))$:
\begin{align*}
&\WSRper{k+\alpha}{p}\bp{\R; E(\Omega)} := \bp{\WSRper{k+1}{p}\bp{\R; E(\Omega)},\WSRper{k}{p}\bp{\R; E(\Omega)}  }_{1-\alpha,p}.
\end{align*}
Moreover, we let
\begin{align*}
&\PS{p}(\R\times\Omega) := \WSRper{3}{p}\bp{\R; \LR{p}(\Omega)} \cap \WSRper{1}{p}\bp{\R; \WSR{4}{p}(\Omega)}.
\end{align*}
Embedding properties are collected in the following lemma.

\begin{lem}\label{EmbeddingLemma}
Let $\Omega\subset\R^3$ be a bounded domain with a $C^{4}$-smooth boundary and $p\in(1,\infty)$. The embeddings 
\begin{align}
&\PS{p}(\R\times\Omega) \embeds 
\WSRper{2}{p}\bp{\R;\WSR{2}{p}(\Omega)},\label{EmbeddingLemEmb1}\\
&\WSRper{2}{p}\bp{\R;\LR{p}(\Omega)}\cap \LRper{p}\bp{\R;\WSR{4}{p}(\Omega)} \embeds
\WSRper{1}{p}\bp{\R;\WSR{2}{p}(\Omega)}\label{EmbeddingLemEmb2}
\end{align}
and $(l\in\{1,2,3\})$
\begin{align}
&\begin{aligned}
&\LR{p}\bp{\R_+; \PS p(\R\times\R^2)}\cap\WSR{4}{p}\bp{\R_+; \WSRper{1}{p}\bp{\R; \LR{p}(\R^2)}} \\
&\qquad\qquad\qquad\qquad\qquad\qquad\qquad\qquad\qquad\embeds
\WSR{l}{p}\bp{\R_+; \WSRper{1}{p}\bp{\R; \WSR{4-l}{p}(\R^2)}}
\end{aligned}\label{EmbeddingLemEmb5}
\end{align}
are continuous.
\end{lem}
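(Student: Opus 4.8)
The plan is to reduce all three embeddings to one-dimensional (in time) interpolation-type estimates combined with standard elliptic/Sobolev interpolation in the spatial variable, using the Fourier representation of time-periodic functions. Recall that a $\per$-time-periodic function on $\R$ has a Fourier series indexed by $k\in\Z$ with frequencies $\perf\cdot k$, so $\partial_t$ acts as multiplication by $i\perf k$ on the $k$-th mode. The key tool throughout will be the mixed-derivative theorem (sometimes attributed to Grisvard, or derivable from the Mikhlin multiplier theorem on the group $\R/\per\Z\times\R^n$ or a bounded domain): if $u$ belongs to $\WSRper{m}{p}(\R;\LR p(\Omega))\cap\LRper p(\R;\WSR{2m}{p}(\Omega))$, then for every $j\in\{0,\dots,m\}$ one has $\partial_t^j u\in\LRper p(\R;\WSR{2(m-j)}{p}(\Omega))$ with a norm bound by the sum of the two endpoint norms.

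\emph{Embedding \eqref{EmbeddingLemEmb1}.} Here $\PS p(\R\times\Omega)=\WSRper{3}{p}(\R;\LR p(\Omega))\cap\WSRper{1}{p}(\R;\WSR{4}{p}(\Omega))$, and I want to land in $\WSRper{2}{p}(\R;\WSR{2}{p}(\Omega))$. First, from $u\in\WSRper{1}{p}(\R;\WSR 4p(\Omega))$ I already control $\partial_t^j u\in\LRper p(\R;\WSR 4p(\Omega))$ for $j\le 1$. It remains to estimate $\partial_t^2 u$ and $\partial_t^j u$ for $j=0,1$ in $\WSR 2p$. Apply the mixed-derivative theorem to the pair $\partial_t u\in\WSRper{2}{p}(\R;\LR p(\Omega))\cap\LRper p(\R;\WSR 4p(\Omega))$ (the first inclusion comes from $u\in\WSRper 3p(\R;\LR p)$, the second from $u\in\WSRper 1p(\R;\WSR 4p)$): this gives $\partial_t^2 u=\partial_t(\partial_t u)\in\LRper p(\R;\WSR 2p(\Omega))$. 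For the lower-order terms, interpolate: $u,\partial_t u\in\LRper p(\R;\WSR 4p)$ already suffices. Collecting the pieces and summing the $\LRper p$-norms gives the claimed bound.

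\emph{Embedding \eqref{EmbeddingLemEmb2}.} This is a direct application of the same mixed-derivative theorem with $m=2$: from $u\in\WSRper{2}{p}(\R;\LR p(\Omega))\cap\LRper p(\R;\WSR 4p(\Omega))$ one reads off $\partial_t u\in\LRper p(\R;\WSR 2p(\Omega))$ and $u\in\LRper p(\R;\WSR 2p(\Omega))$ (the $j=2,0$ endpoints being in the hypothesis), which is exactly $u\in\WSRper 1p(\R;\WSR 2p(\Omega))$ with the norm controlled.

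\emph{Embedding \eqref{EmbeddingLemEmb5}.} This one is genuinely higher-dimensional — it involves an extra half-line variable in $\R_+$, a periodic time variable, and a spatial variable in $\R^2$ — but the structure is the same product of one-variable embeddings. Fix the $\WSRper 1p(\R;\cdot)$ direction as a passive parameter and view the remaining function of $(\R_+\text{-variable}, \R^2\text{-variable})$: the hypothesis says it lies in $\LR p(\R_+;\PS p(\R\times\R^2))\cap\WSR 4p(\R_+;\LR p(\R^2))$ after also carrying the periodic slot. Interpreting $\PS p(\R\times\R^2)$ as providing $\WSR 4p$-regularity in the $\R^2$ variable (up to the time-periodic slot via \eqref{EmbeddingLemEmb2} applied on $\R^2$), this is again of mixed-derivative type in the pair ($\R_+$-variable, $\R^2$-variable) with total order $4$, so for $l\in\{1,2,3\}$ one gets $\partial_s^l\,(\cdot)\in\LR p(\R_+;\WSR{4-l}{p}(\R^2))$ uniformly in the periodic slot, i.e. membership in $\WSR lp(\R_+;\WSRper 1p(\R;\WSR{4-l}{p}(\R^2)))$. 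One must keep the $\WSRper 1p(\R;\cdot)$ factor inert throughout, which is legitimate because the multiplier theorem applies on the full product group.

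\emph{Main obstacle.} The routine content is all in the mixed-derivative / anisotropic multiplier theorem; the only real care needed is bookkeeping in \eqref{EmbeddingLemEmb5}, namely verifying that the time-periodic $\WSRper 1p(\R;\cdot)$ slot can be treated as an inert Banach-space-valued parameter while the mixed-derivative interpolation is carried out in the $(\R_+,\R^2)$ variables. This is where I would be most careful — checking that the relevant operator-valued Fourier multiplier is $\mathcal R$-bounded (or simply invoking Mikhlin on $\R\times\R_+\times\R^2$ with $\R$ replaced by the dual group $\Z$ of the torus) so that the vector-valued version of the estimate holds on $\LR p$ for $p\in(1,\infty)$.
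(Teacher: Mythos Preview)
Your approach is correct and morally equivalent to the paper's, but packaged differently. The paper first reduces to the whole-space case $\Omega=\R^3$ via a $\CR{4}$-extension operator, identifies the time-periodic spaces with spaces on the torus $\torus=\R/\per\Z$, and then rewrites each embedding as a concrete Fourier-multiplier statement on the group $\torus\times\R^3$: for \eqref{EmbeddingLemEmb1} the symbol is $\mmultiplier(\eta,\xi)=(1+\snorm{\eta}^2\snorm{\xi}^2)/(1+\snorm{\eta}^3+\snorm{\eta}\snorm{\xi}^4)$, whose Marcinkiewicz bounds are checked by hand (via Young's inequality) on $\R\times\R^3$ and then transferred to $\Z\times\R^3$ by de~Leeuw's principle; \eqref{EmbeddingLemEmb2} and \eqref{EmbeddingLemEmb5} are dispatched ``in a completely similar manner'' with the appropriate symbols. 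Your route via the mixed-derivative theorem is the black-box version of exactly this multiplier computation, and your reduction for \eqref{EmbeddingLemEmb5}---dropping the $\WSRper{3}{p}$ information from $\PS p$ to keep only $\WSRper{1}{p}\bp{\R;\WSR{4}{p}(\R^2)}$, and then treating $\WSRper{1}{p}(\R;\cdot)$ as an inert Banach parameter while interpolating in $(\R_+,\R^2)$---is a legitimate shortcut that the paper implicitly sidesteps by staying in the scalar-multiplier setting on the full product group. The trade-off: your argument is shorter if one is willing to cite the mixed-derivative theorem (and its vector-valued or $\mathcal R$-bounded variant for \eqref{EmbeddingLemEmb5}), while the paper's is more self-contained and avoids any operator-valued discussion by keeping everything scalar via de~Leeuw.
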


\begin{proof}
The regularity of $\Omega$ suffices to ensure the existence of a continuous extension operator $E\colon\PS{p}(\R\times\Omega)\to\PS{p}(\R\times\R^3)$ as in the case of classical Sobolev spaces. Consequently, it suffices to prove Lemma \ref{EmbeddingLemma} 
in the whole-space case $\Omega=\R^3$. For this purpose, it is convenient to replace the time-axis $\R$ with the torus 
$\torus:=\R/\per\Z$ in the function spaces of $\per$-time-periodic functions. The torus $\torus$ canonically inherits a topology and
differentiable structure from $\R$ via the 
quotient mapping $\quotientmap:\R\ra\torus$ in such a way that 
\begin{align*}
\CRi\bp{\torus; E(\R^3)} = \setc{f:\torus\ra E(\R^3)}{f\circ\quotientmap\in\CRi(\R;E(\R^3))}
\end{align*}
for any generic Sobolev space $E(\R^3)$.
Moreover, if $\torus$ is equipped with the normalized Haar measure, we may introduce the norms 
$\norm{\cdot}_{\LR{p}(\torus;E(\R^3))}$ and 
$\norm{\cdot}_{\WSR{k}{p}(\torus;E(\R^3))}$ on $\CRi\bp{\torus; E(\R^3)}$ by the same expressions as in \eqref{LppernormDef}--\eqref{WSRpernormDef}. Lebesgue spaces $\LR{p}\bp{\torus;E(\R^3)}$ and Sobolev spaces 
$\WSR{k}{p}\bp{\torus;E(\R^3)}$ on the torus are then defined as in \eqref{DefLpperSpace} and \eqref{DefWSRperSpace}, respectively. 
The quotient map $\quotientmap$ employed as a lifting operator acts as an isometric isomorphism between $\CRiper\bp{\R; E(\R^3)}$ and $\CRi\bp{\torus; E(\R^3)}$, and consequently also between the Sobolev spaces 
$\WSRper{k}{p}\bp{\R;E(\R^3)}$ and 
$\WSR{k}{p}\bp{\torus;E(\R^3)}$. 
We may therefore verify the embeddings in the setting of function spaces where the time-axis has been replaced with the torus. In this setting, we can employ the Fourier transform $\FT_{\torus\times\R^3}$ in time \emph{and} space to characterize the Sobolev spaces 
\begin{multline*}
\WSR{2}{p}\bp{\torus;\WSR{2}{p}(\R^3)} \\= \setcL{f\in\TDR\np{\torus\times\R^3}}{\iFT_{\torus\times\R^3}\bb{\bp{1+\snorm{k}^2\snorm{\xi}^2}\FT_{\torus\times\R^3}\nb{f}}\in\LR{p}(\torus\times\R^3)}
\end{multline*}
and
\begin{align*}
\PSnp{p}(\torus\times\R^3) =& \WSR{3}{p}\bp{\torus; \LR{p}(\R^3)} \cap \WSR{1}{p}\bp{\torus; \WSR{4}{p}(\R^3}\\
=& \setcL{f\in\TDR\np{\torus\times\R^3}}{\iFT_{\torus\times\R^3}\bb{\bp{1+\snorm{k}^3+\snorm{k}\snorm{\xi}^4}\FT_{\torus\times\R^3}\nb{f}}\in\LR{p}(\torus\times\R^3)}.
\end{align*}
Here, $\TDR\np{\torus\times\R^3}$ denotes the space of Schwartz-Bruhat distributions on the locally compact abelian group $\torus\times\R^3$, and $(k,\xi)\in\Z\times\R^3$ an element of the corresponding dual group.
Now observe that
\begin{multline}\label{MultiplierIdEmb1}
\norm{f}_{\WSR{2}{p}\np{\torus;\WSR{2}{p}(\R^3)}} \\\leq C\, 
\normL{\iFT_{\torus\times\R^3}\Bb{\frac{1+\snorm{k}^2\snorm{\xi}^2}{1+\snorm{k}^3+\snorm{k}\snorm{\xi}^4}
\FT_{\torus\times\R^3}\bb{\iFT_{\torus\times\R^3} \nb{(1+\snorm{k}^3+\snorm{k}\snorm{\xi}^4)\FT_{\torus\times\R^3}\nb{f}}}}}_p.
\end{multline}
The multiplier
\begin{align*}
\mmultiplier:\R\times\R^3\ra\C,\quad \mmultiplier(\eta,\xi):= \frac{1+\snorm{\eta}^2\snorm{\xi}^2}{1+\snorm{\eta}^3+\snorm{\eta}\snorm{\xi}^4}
\end{align*}
satisfies the condition of the Marcinkiewicz's multiplier theorem (\cite[Chapter IV, \S 6]{Stein70}). Indeed, 
by Young's inequality $\snorm{\eta}^2\snorm{\xi}^2\leq \frac{1}{3}\snorm{\eta}^3+\frac{2}{3}\snorm{\eta}\snorm{\xi}^4$, whence 
$\norm{\mmultiplier}_\infty<\infty$. Similarly, one may verify that  
\begin{align*}
\max_{\epsilon\in\set{0,1}^{n+1}} \norm{\xi_1^{\epsilon_1}\cdots\xi_n^{\epsilon_n}\eta^{\epsilon_{n+1}}
\partial_{1}^{\epsilon_1}\cdots\partial_{n}^{\epsilon_n}\partial_\eta^{\epsilon_{n+1}}
\mmultiplier(\eta,\xi)}_\infty < \infty.
\end{align*}
Consequently, $\mmultiplier$ is an $\LR{p}(\R\times\R^3)$ multiplier. 
By de Leeuw's Transference Principle for Fourier multipliers an locally compact abelian groups (see for example \cite[Theorem B.2.1]{EdG77}), it follows that the restriction $\mmultiplier_{|\Z\times\R^3}$ is an $\LR{p}(\torus\times\R^3)$ multiplier. From \eqref{MultiplierIdEmb1} we thus deduce 
\begin{align*}
\norm{f}_{\WSR{2}{p}\np{\torus;\WSR{2}{p}(\R^3)}} \leq C\,
\normL{\iFT_{\torus\times\R^3} \nb{(1+\snorm{k}^3+\snorm{k}\snorm{\xi}^4)\FT_{\torus\times\R^3}\nb{f}}}_{\LR{p}(\torus\times\R^3)}\leq 
C\,\norm{f}_{\PSnp{p}(\torus\times\R^3)},
\end{align*}
and we conclude \eqref{EmbeddingLemEmb1}.
The embeddings \eqref{EmbeddingLemEmb2}--\eqref{EmbeddingLemEmb5} can be established in a completely similar manner. 
\end{proof}

Additional, we make use of the following embedding properties, which have already been established in \cite{GaldiKyed}:

\begin{lem}\label{SobEmbeddingThm}
Let $\Omega\subset\R^3$ be a bounded domain with a $C^{2}$-smooth boundary and $p\in(1,\infty)$. 
Assume that $\alpha\in\bb{0,2}$ and $p_0,r_0\in[q,\infty]$ satisfy
\begin{align*}
\begin{pdeq}
&r_0\leq \frac{2q}{2-\alpha q} && \tif\ \alpha q<2,\\
&r_0<\infty && \tif\ \alpha q =2,\\
&r_0\leq\infty && \tif\ \alpha q >2,
\end{pdeq}
\qquad
\begin{pdeq}
&p_0\leq \frac{nq}{n-(2-\alpha) q} && \tif\  \np{2-\alpha}q<{n},\\
&p_0<\infty && \tif\ \np{2-\alpha}q={n},\\
&p_0\leq\infty && \tif\ \np{2-\alpha}q>{n},
\end{pdeq}
\end{align*}
and that $\beta\in\bb{0,1}$ and $p_1,r_1\in[q,\infty]$ satisfy
\begin{align*}
\begin{pdeq}
&r_1\leq \frac{2q}{2-\beta q} && \tif\ \beta q<2,\\
&r_1<\infty && \tif\ \beta q =2,\\
&r_1\leq\infty && \tif\ \beta q >2,
\end{pdeq}
\qquad
\begin{pdeq}
&p_1\leq \frac{nq}{n-(1-\beta) q} && \tif\  \np{1-\beta}q<{n},\\
&p_1<\infty && \tif\ \np{1-\beta}q={n},\\
&p_1\leq\infty && \tif\ \np{1-\beta}q>{n}.
\end{pdeq}
\end{align*}
Then 
\begin{align*}
\norm{\uvel}_{\LRper{r_0}\np{\R;\LR{p_0}(\Omega)}}
+\norm{\grad\uvel}_{\LRper{r_1}\np{\R;\LR{p_1}(\Omega)}}  \leq
\Cc{C}\norm{\uvel}_{\WSRper{1}{p}(\R; \LR{p}(\Omega)) \cap \LRper{p}\left(\R; \WSR{2}{p}(\Omega)\right)},
\end{align*}
with $\Cclast{C}=\Cclast{C}(\per,\Omega,r_0,p_0,r_1,p_1)$.
\end{lem}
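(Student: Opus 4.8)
The plan is to reduce the time-periodic mixed-norm embedding to a combination of (i) the corresponding \emph{stationary} elliptic embeddings on $\Omega\subset\R^3$ in the spatial variable, and (ii) a \emph{temporal} embedding for vector-valued periodic Sobolev functions obtained by real interpolation. Write $\calx:=\WSRper{1}{p}(\R;\LR{p}(\Omega))\cap\LRper{p}(\R;\WSR{2}{p}(\Omega))$. First I would observe that, by the standard extension argument already invoked in the proof of Lemma \ref{EmbeddingLemma} (a $C^2$-boundary suffices here since only two spatial derivatives occur), it is enough to treat $\Omega=\R^3$ and, after lifting via $\quotientmap$, to work on the torus $\torus=\R/\per\Z$. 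On the torus the two pieces of the $\calx$-norm are governed by the Fourier multiplier weights $1+\snorm{\xi}^2$ and $1+\snorm{k}$ respectively; equivalently, $\calx\cong\WSR{1}{p}(\torus;\LR{p}(\R^3))\cap\LR{p}(\torus;\WSR{2}{p}(\R^3))$ is the natural anisotropic (parabolic) Sobolev space of order $(1;2)$ in $(t;x)$.

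Next I would bound the spatial term $\norm{\grad\uvel}_{\LRper{r_1}(\R;\LR{p_1}(\Omega))}$. For each fixed $t$, the hypotheses on $\beta$, $p_1$ are exactly those making the fractional Sobolev embedding $\WSR{2-\beta}{q}(\Omega)\embeds\WSR{1}{p_1}(\Omega)$ hold, so $\norm{\grad\uvel(t)}_{p_1}\le C\norm{\uvel(t)}_{\WSR{2-\beta}{q}(\Omega)}$; integrating in $t$ this passes to $\LRper{r_1}$ in time once we control $\norm{\uvel}_{\LRper{r_1}(\R;\WSR{2-\beta}{q}(\Omega))}$. The latter is where the temporal hypothesis enters: by real interpolation between $\WSRper{1}{p}(\R;\LR{p})$ and $\LRper{p}(\R;\WSR{2}{p})$ one gets $\uvel\in\WSRper{\theta}{p}(\R;\WSR{2(1-\theta)}{p}(\Omega))$ for $\theta\in[0,1]$; the Sobolev embedding $\WSRper{\theta}{p}(\R;\cdot)\embeds\LRper{r_1}(\R;\cdot)$ in the one-dimensional time variable holds precisely when $r_1\le\frac{p}{1-\theta p}$ (or the limiting/trivial cases), and choosing $\theta$ so that $2(1-\theta)=2-\beta$, i.e. $\theta=\beta/2$, converts this into the stated condition $r_1\le\frac{2q}{2-\beta q}$. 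The term $\norm{\uvel}_{\LRper{r_0}(\R;\LR{p_0}(\Omega))}$ is handled identically with the elliptic embedding $\WSR{2-\alpha}{q}(\Omega)\embeds\LR{p_0}(\Omega)$ in space and $\theta=\alpha/2$ in time, yielding the $(\alpha,p_0,r_0)$ conditions. Summing the two estimates gives the claim, with the constant depending on $\per,\Omega$ and the exponents as asserted; since this is essentially the argument of \cite{GaldiKyed}, I would simply cite it for the routine interpolation/embedding bookkeeping.

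The main obstacle is organizing the interpolation correctly: one must check that the real-interpolation scale $(\WSRper{1}{p}(\R;\LR{p}),\LRper{p}(\R;\WSR{2}{p}))_{1-\theta,p}$ really coincides (up to equivalent norms) with the mixed space $\WSRper{\theta}{p}(\R;\WSR{2(1-\theta)}{p})$ — this uses that the two endpoint spaces form a compatible couple with commuting (Fourier-multiplier) structure, which is transparent on the torus but needs the extension step to be legitimate on a general $\Omega$ — and then that the two one-parameter embeddings (temporal Sobolev–Slobodecki\u{\i} and spatial elliptic) can be composed without loss. The borderline cases ($\alpha q=2$, $(2-\alpha)q=n$, etc.) require the usual care in choosing the exponents strictly below the critical value, but present no real difficulty. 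Since all of this is carried out in \cite{GaldiKyed}, the proof here amounts to citing that reference.

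\begin{proof}
This is \cite[and the mixed-norm embeddings therein]{GaldiKyed}; see the discussion above.
\end{proof}
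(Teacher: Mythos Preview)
Your proposal is correct and matches the paper's own proof, which simply reads ``See \cite[Theorem 4.1]{GaldiKyed}.'' Your additional sketch of the interpolation argument is reasonable but not needed here, since the paper treats this lemma purely as a citation.
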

\begin{proof}
See \cite[Theorem 4.1]{GaldiKyed}.
\end{proof}

Three types of trace operators are employed in the following:
\begin{align*}
&\TDN:\CRiper(\R\times\overline{\Omega})\ra\CRper(\R\times\partial\Omega),&&\TDN(\uvel):=\uvel_{|{\R\times\partial\Omega}},\\
&\TD:\CRiper(\R\times\overline{\Omega})\ra\CRper(\R\times\partial\Omega)^2,&& 
\TD(\uvel):=\left(\uvel_{|{\R\times\partial\Omega}}, \, \Delta\uvel_{|{\R\times\partial\Omega}}\right),\\
&\TN:\CRiper(\R\times\overline{\Omega})\ra\CRper(\R\times\partial\Omega)^2,&& 
\TN(\uvel):=\left(\partial_\nu\uvel_{|{\R\times\partial\Omega}}, \, \partial_\nu{\Delta\uvel}_{|{\R\times\partial\Omega}}\right).
\end{align*}
In order to characterize appropriate trace spaces, we introduce 
\begin{align*}
\begin{aligned}
&\TRD{1}(\R\times\partial\Omega) := 
\WSRper{3-\frac{1}{2p}}{p}\bp{\R; \LR{p}(\partial\Omega)}\cap\WSRper{1}{p}\bp{\R; \WSR{4-\frac{1}{p}}{p}(\partial\Omega)},\\
&\TRD{2}(\R\times\partial\Omega) := \WSRper{2-\frac{1}{2p}}{p}\bp{\R; \LR{p}(\partial\Omega)}\cap
\WSRper{1}{p}\bp{\R; \WSR{2-\frac{1}{p}}{p}(\partial\Omega)},\\
&\TRN{1}(\R\times\partial\Omega) := \WSRper{\frac{5}{2} - \frac{1}{2p}}{p}\bp{\R; \LR{p}(\partial\Omega)}\cap
\WSRper{1}{p}\bp{\R; \WSR{3-\frac{1}{p}}{p}(\partial\Omega)}, \\
&\TRN{2}(\R\times\partial\Omega) := \WSRper{\frac{3}{2} - \frac{1}{2p}}{p}\bp{\R; \LR{p}(\partial\Omega)}\cap
\WSRper{1}{p}\bp{\R; \WSR{1-\frac{1}{p}}{p}(\partial\Omega)}.
\end{aligned}
\end{align*}
These spaces can be identified as trace spaces in the following sense:

\begin{lem}\label{TraceSpaceLemma}
Let $\Omega\subset\R^3$ be a bounded domain with a $\CR{4}$-smooth boundary. The 
trace operators extend to bounded operators:
\begin{align}
&\TDN: \PS{p}(\R\times\Omega)\to \TRD{1}(\R\times\partial\Omega),\label{TraceSpaceLemma_TDNa}\\
&\TDN: \WSRper{2}{p}\bp{\R; \LR{p}(\Omega)} \cap \WSRper{1}{p}\bp{\R; \WSR{2}{p}(\Omega)} \to \TRD{2}(\R\times\partial\Omega),\label{TraceSpaceLemma_TDNb}\\
&\begin{aligned}
&\TDN: \WSRper{2}{p}\bp{\R; \LR{p}(\Omega)} \cap \LRper{p}\bp{\R; \WSR{4}{p}(\Omega)} \\
&\qquad\qquad\qquad\qquad\qquad\qquad
\to \WSRper{2-\frac{1}{2p}}{p}\bp{\R; \LR{p}(\partial\Omega)}\cap\LRper{p}\bp{\R;\WSR{4-\frac{1}{p}}{p}(\partial\Omega)}, 
\end{aligned}\label{TraceSpaceLemma_TDNc}\\
&\TD: \PS{p}(\R\times\Omega) \to \TRD{1}(\R\times\partial\Omega) \times \TRD{2}(\R\times\partial\Omega),\label{TraceSpaceLemma_TD}\\
&\TN: \PS{p}(\R\times\Omega) \to \TRN{1}(\R\times\partial\Omega) \times \TRN{2}(\R\times\partial\Omega).\label{TraceSpaceLemma_TN}
\end{align}
Moreover, the operators above possess a continuous right-inverse. By $\TDNrinv$ we denote a continuous right-inverse to $\TDN$.
\end{lem}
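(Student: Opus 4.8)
The plan is to reduce everything to the known mapping properties of the classical parabolic and elliptic trace operators together with the real-interpolation definition of the Sobolev–Slobodeckij time-regularity, just as in the proof of Lemma \ref{EmbeddingLemma}. First I would use the $C^4$-extension operator $E\colon\PS{p}(\R\times\Omega)\to\PS{p}(\R\times\R^3)$ and, more importantly, replace the time axis $\R$ by the torus $\torus=\R/\per\Z$ via the lifting map $\quotientmap$, so that it suffices to work with $\WSR{k}{p}(\torus;E(\Omega))$-type spaces. On the torus the trace operators $\TDN,\TD,\TN$ act only in the spatial variable for each fixed $t$, so their boundedness follows from the classical spatial trace theorem applied pointwise in $t$ and then integrated; the only subtlety is keeping track of how many time-derivatives survive, and this is recorded precisely in the definitions of $\TRD{i}$ and $\TRN{i}$.

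The key steps, in order: (i) For \eqref{TraceSpaceLemma_TDNa}, write $f\in\PS{p}$ so that $f\in\WSRper{3}{p}(\R;\LR{p}(\Omega))$ and $f\in\WSRper{1}{p}(\R;\WSR{4}{p}(\Omega))$; the classical trace $u\mapsto u_{|\partial\Omega}$ maps $\WSR{4}{p}(\Omega)\to\WSR{4-1/p}{p}(\partial\Omega)$ and $\LR{p}(\Omega)\to$ nothing useful, so one must interpolate: the correct scale is obtained by noting that $\PS{p}$ embeds (by the same Marcinkiewicz/de Leeuw multiplier argument as in Lemma \ref{EmbeddingLemma}) into $\WSRper{3-1/(2p)}{p}(\R;\LR{p}(\partial\Omega))\cap\WSRper{1}{p}(\R;\WSR{4-1/p}{p}(\partial\Omega))$ after taking the trace; here the loss of $\tfrac{1}{2p}$ time-derivatives when trading a half spatial derivative is exactly the anisotropic parabolic scaling, and it is produced by interpolating the two endpoint mapping properties of the spatial trace against the real-interpolation definition of the fractional-in-time spaces. (ii) \eqref{TraceSpaceLemma_TDNb} and \eqref{TraceSpaceLemma_TDNc} are the same argument with $\WSR{2}{p}(\Omega)$ (resp. $\WSR{4}{p}(\Omega)$) and $\LR{p}(\Omega)$ as the spatial endpoints and one (resp. zero) extra time-derivative, giving $\WSR{2-1/p}{p}$ (resp. $\WSR{4-1/p}{p}$) on the boundary and the $\tfrac{1}{2p}$-loss in time. (iii) \eqref{TraceSpaceLemma_TD} is just \eqref{TraceSpaceLemma_TDNa} in the first component together with \eqref{TraceSpaceLemma_TDNb} applied to $\Delta u$ in the second, using that $u\in\PS{p}$ implies $\Delta u\in\WSRper{3}{p}(\R;\WSR{-2}{p})\cap\ldots$ — more cleanly, $\Delta$ maps $\PS{p}(\R\times\Omega)$ into $\WSRper{2}{p}(\R;\LR{p}(\Omega))\cap\WSRper{1}{p}(\R;\WSR{2}{p}(\Omega))$, so \eqref{TraceSpaceLemma_TDNb} gives the $\TRD{2}$-bound. (iv) \eqref{TraceSpaceLemma_TN} is identical but with the Neumann trace $u\mapsto\partial_\nu u$, which costs one more spatial derivative and hence produces the shifted exponents $\tfrac{5}{2}-\tfrac{1}{2p}$, $3-\tfrac1p$ (resp. $\tfrac32-\tfrac1{2p}$, $1-\tfrac1p$) defining $\TRN{1},\TRN{2}$.

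For the right-inverses: on the torus one constructs a continuous right-inverse to each spatial trace operator (standard, e.g. via the half-space and a partition of unity on $\partial\Omega$) depending measurably on the time frequency, or — cleaner — one invokes the existence of a bounded right-inverse for the classical anisotropic trace operator on $\torus\times\R^3_+$ after flattening the boundary, and patches with a partition of unity; the $C^4$-regularity of $\partial\Omega$ is exactly what is needed for this patching to preserve the $\WSR{4}{p}$-spatial regularity. The main obstacle is purely bookkeeping: verifying that the anisotropic trace spaces $\TRD{i}$, $\TRN{i}$, which are defined as intersections of a fractional-order-in-time/$\LR{p}$-in-space space with a first-order-in-time/fractional-in-space space, are genuinely the image of the corresponding solution spaces under the trace — i.e., that the real-interpolation definition of $\WSRper{k+\alpha}{p}(\R;E(\Omega))$ is compatible with the Fourier-analytic (multiplier) characterisation used for the spatial trace. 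This compatibility is standard once one passes to $\torus$ and uses that for UMD spaces the Bessel-potential and Sobolev–Slobodeckij scales agree up to the interpolation exponent; I would cite the relevant trace theorem for anisotropic Sobolev spaces (e.g. from the maximal-regularity literature) rather than reprove it, exactly as the excerpt does for Lemma \ref{SobEmbeddingThm}.
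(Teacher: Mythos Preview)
Your outline is correct in spirit, but the route you take differs substantially from the paper's, and in one place is vaguer than it should be.

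The paper does \emph{not} apply the spatial trace pointwise in $t$ and then interpolate in time. Instead it reduces to the half-space $\Omega=\R^3_+$, separates out the \emph{normal} spatial variable, and rewrites
\[
\PS{p}(\R\times\R^3_+)=\LR{p}\bp{\R_+;\PS{p}(\R\times\R^2)}\cap\WSR{4}{p}\bp{\R_+;\WSRper{1}{p}(\R;\LR{p}(\R^2))}
\]
(the intermediate terms being absorbed via the embedding \eqref{EmbeddingLemEmb5}). Then Triebel's abstract trace theorem \cite[Theorem 1.8.3]{TriebelInterpolation} yields that $\TDN$ maps into the real interpolation space $\bp{\WSRper{1}{p}(\R;\LR{p}(\R^2)),\,\PS{p}(\R\times\R^2)}_{1-1/4p,\,p}$. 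The paper then verifies that this couple is \emph{quasilinearizable} in Triebel's sense, exhibiting the explicit admissible operator $V_1(\mu)=\mu^{-1}(\mu^{-1}-\partial_t^2+\Delta^2)^{-1}$; this delivers both the continuous right-inverse via \cite[Theorem 1.8.5]{TriebelInterpolation} and the identification of the interpolation space as $\TRD{1}$ via \cite[Theorem 1.12.1]{TriebelInterpolation}.

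Your approach treats time as the distinguished variable and would ultimately defer the sharp trace statement (including the right-inverse) to an anisotropic trace theorem from the maximal-regularity literature. That is legitimate, but the step you label ``interpolate the endpoint mapping properties of the spatial trace'' is where the real content lies: since the spatial trace is not bounded on $\LR{p}(\Omega)$, you cannot literally interpolate two mapping properties of $\TDN$, and you have to pass through exactly the kind of normal-variable reformulation the paper uses (or cite a result that has done so). Your handling of \eqref{TraceSpaceLemma_TD} via $\Delta:\PS{p}\to\WSRper{2}{p}(\R;\LR{p})\cap\WSRper{1}{p}(\R;\WSR{2}{p})$ followed by \eqref{TraceSpaceLemma_TDNb} is fine and matches what the paper would do. The paper's route buys self-containment (only Triebel's book is needed) and a clean mechanism for the right-inverse; yours is conceptually lighter but pushes the hard step into a citation you have not yet named.
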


\begin{proof}
It suffices to verify the assertions in the half space case $\Omega:=\R^3_+$. The general case of a bounded domain $\Omega$ with a $\CR{4}$-smooth boundary then follows via localization. 
Observe that
\begin{align*}
\PS p(\R\times\R^3_+) &=  \LR{p}\bp{\R_+; \PS p(\R\times\R^2)}\cap\WSR{4}{p}\bp{\R_+; \WSRper{1}{p}\bp{\R; \LR{p}(\R^2)}} \\
&\quad\cap\WSR{3}{p}\bp{\R_+; \WSRper{1}{p}\bp{\R; \WSR{1}{p}(\R^2)}}\cap\WSR{2}{p}\bp{\R_+; \WSRper{1}{p}\bp{\R; \WSR{2}{p}(\R^2)}} \\
&\quad\cap\WSR{1}{p}\bp{\R_+; \WSRper{1}{p}\bp{\R; \WSR{3}{p}(\R^2)}} \\
&=  \LR{p}\bp{\R_+; \PS p(\R\times\R^2)}\cap\WSR{4}{p}\bp{\R_+; \WSRper{1}{p}\bp{\R; \LR{p}(\R^2)}},
\end{align*}
where the last equality is due to the embeddings \eqref{EmbeddingLemEmb5}.
It follows from \cite[Theorem 1.8.3]{TriebelInterpolation} that $\TDN$ extends to continuous operator
\begin{align*}
\TDN: \PS p(\R\times\R^3_+) \to \left(\WSRper{1}{p}(\R; \LR{p}(\R^2)), \, \PS{p}(\R\times\R^2)\right)_{1-1/4p, \, p}.
\end{align*}
One may verify that $\PS p(\R\times\R^2)$ and $\WSRper{1}{p}\bp{\R; \LR{p}(\R^2)}$ 
form a quasilinearizable interpolation couple; see  
\cite[Definition 1.8.4]{TriebelInterpolation}. Indeed,
an admissible operator  
in the sense of 
\cite[Definition 1.8.4]{TriebelInterpolation} 
is given by $V_1(\mu):=\mu^{-1}\bp{\mu^{-1}-\partial_t^2+\Delta^2}^{-1}$,
where invertibility of $\bp{\mu^{-1}-\partial_t^2+\Delta^2}:\PS p(\R\times\R^2)\ra\WSRper{1}{p}\bp{\R; \LR{p}(\R^2)}$ can be established by an analysis of the multiplier $(\eta,\xi)\ra\bp{\mu^{-1}+\eta^2+\snorm{\xi}^4}^{-1}$ and an application of de Leeuw's Transference Principle 
as in the proof of Lemma \ref{EmbeddingLemma}.
Consequently, one obtains even better properties of the trace operator, namely that it possesses a continuous right inverse; see \cite[Theorem 1.8.5]{TriebelInterpolation}. Moreover, we can utilize the property stated in \cite[Theorem 1.12.1]{TriebelInterpolation} concerning interpolation of intersections 
of spaces that form quasilinearizable interpolation couples to conclude 
\begin{align*}
\left(\WSRper{1}{p}(\R; \LR{p}(\R^2)), \, \PS{p}(\R\times\R^2)\right)_{1-1/4p, \, p} = \TRD 1(\R\times\R^2),
\end{align*}
which verifies \eqref{TraceSpaceLemma_TDNa}. The assertions \eqref{TraceSpaceLemma_TDNb}--\eqref{TraceSpaceLemma_TN} follow in a similar way.
\end{proof}

\begin{lem}\label{EmbeddingLemma2}
Let $\Omega\subset\R^3$ be a bounded domain with a $C^{4}$-smooth boundary. The embedding 
\begin{align}
&\WSRper{2-\frac{1}{2p}}{p}\bp{\R; \LR{p}(\partial\Omega)}\cap\LRper{p}\bp{\R;\WSR{4-\frac{1}{p}}{p}(\partial\Omega)}
\embeds \TRD{2}(\R\times\partial\Omega)\label{EmbeddingLemEmb3}
\end{align}
is continuous.
\end{lem}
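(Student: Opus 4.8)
Write $\calz:=\WSRper{2-\frac{1}{2p}}{p}\bp{\R; \LR{p}(\partial\Omega)}\cap\LRper{p}\bp{\R;\WSR{4-\frac{1}{p}}{p}(\partial\Omega)}$ for the space on the left of \eqref{EmbeddingLemEmb3}. The plan is to identify $\calz$ as the $\TDN$-trace space of the interior space
\[
\calx:=\WSRper{2}{p}\bp{\R;\LR{p}(\Omega)}\cap\LRper{p}\bp{\R;\WSR{4}{p}(\Omega)},
\]
and $\TRD{2}(\R\times\partial\Omega)$ as the $\TDN$-trace space of
\[
\caly:=\WSRper{2}{p}\bp{\R;\LR{p}(\Omega)}\cap\WSRper{1}{p}\bp{\R;\WSR{2}{p}(\Omega)};
\]
both identifications, together with the existence of a continuous right-inverse in each case, are supplied by Lemma \ref{TraceSpaceLemma}, namely by \eqref{TraceSpaceLemma_TDNc} and \eqref{TraceSpaceLemma_TDNb}. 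Since the interior embedding $\calx\embeds\caly$ is already available from Lemma \ref{EmbeddingLemma}, specifically \eqref{EmbeddingLemEmb2}, the asserted boundary embedding $\calz\embeds\TRD{2}(\R\times\partial\Omega)$ will follow by lifting an element of $\calz$ to $\calx$, applying the interior embedding, and taking traces once more.

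In detail, I would argue as follows. Fix $w\in\calz$. By Lemma \ref{TraceSpaceLemma} the trace operator $\TDN\colon\calx\to\calz$ of \eqref{TraceSpaceLemma_TDNc} admits a continuous right-inverse $\mathcal R$; hence $W:=\mathcal R w\in\calx$ satisfies $\TDN W=w$ and $\norm{W}_{\calx}\leq C\,\norm{w}_{\calz}$ (here and below $C>0$ denotes a constant independent of $w$, possibly changing from line to line). Applying \eqref{EmbeddingLemEmb2} we get $W\in\WSRper{1}{p}(\R;\WSR{2}{p}(\Omega))$ with the corresponding norm bounded by $C\,\norm{W}_{\calx}$, and since $W\in\WSRper{2}{p}(\R;\LR{p}(\Omega))$ by definition of $\calx$, it follows that $W\in\caly$ with $\norm{W}_{\caly}\leq C\,\norm{w}_{\calz}$. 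Finally, the trace estimate \eqref{TraceSpaceLemma_TDNb} applied to $W$, together with $\TDN W=w$, gives $w\in\TRD{2}(\R\times\partial\Omega)$ and $\norm{w}_{\TRD{2}(\R\times\partial\Omega)}\leq C\,\norm{W}_{\caly}\leq C\,\norm{w}_{\calz}$, which is the continuity of the embedding \eqref{EmbeddingLemEmb3}.

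I do not expect a genuine obstacle here; the one point deserving care is to use precisely the right-inverse of the trace operator \eqref{TraceSpaceLemma_TDNc}, rather than the right-inverse $\TDNrinv$ of $\TDN\colon\PS{p}(\R\times\Omega)\to\TRD{1}(\R\times\partial\Omega)$, so that the lift $W$ lands in the intermediate space $\calx$ on which \eqref{EmbeddingLemEmb2} is formulated. An alternative route, bypassing the lifting, would be to prove \eqref{EmbeddingLemEmb3} directly: localising $\partial\Omega$ to $\R^{2}$ and replacing the time-axis with the torus $\torus:=\R/\per\Z$, the statement reduces to a borderline mixed-derivative embedding on $\torus\times\R^{2}$ whose exponents satisfy the scaling identity $\tfrac{1}{\,2-1/(2p)\,}+\tfrac{\,2-1/p\,}{\,4-1/p\,}=1$; this can be established by a Marcinkiewicz-type Fourier-multiplier estimate combined with de Leeuw's transference principle, in the spirit of the proof of Lemma \ref{EmbeddingLemma}. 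I would nonetheless favour the lifting argument, since it is shorter and re-uses machinery already in place.
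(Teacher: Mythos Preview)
Your proof is correct and follows exactly the same route as the paper: lift via the right-inverse of \eqref{TraceSpaceLemma_TDNc}, apply the interior embedding \eqref{EmbeddingLemEmb2}, and take the trace \eqref{TraceSpaceLemma_TDNb}. The paper compresses this into the single composition $\TDN\circ\iota\circ\TDNrinv$, and your remark that one must use the right-inverse associated with \eqref{TraceSpaceLemma_TDNc} (rather than the one for $\PS{p}\to\TRD{1}$) is precisely the point hidden behind the paper's overloaded notation $\TDNrinv$.
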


\begin{proof}
Denote the embedding \eqref{EmbeddingLemEmb2} by $\iota$. Recalling \eqref{TraceSpaceLemma_TDNb} and \eqref{TraceSpaceLemma_TDNc},
we find that $\TDN\circ\iota\circ\TDNrinv$ yields the embedding \eqref{EmbeddingLemEmb3}.
\end{proof}

For functions $f$ defined on time-space domains, we let
\begin{equation*}
\PR f (t,x) := \frac{1}{\per}\int_0^\per f(s,x)\,\ds,\quad \oPR f(t,x):= f(t,x)-\proj f(t,x)
\end{equation*}
whenever the integral is well defined. Since $\PR f$ is independent on time $t$, we shall implicitly treat $\proj f$ as a function 
in the spatial variable $x$ only. 
Observe that $\proj$ and $\projcompl$ are complementary projections on the space $\CRiper\bp{\R; E(\Omega)}$.
We shall employ the projections 
to decompose the Lebesgue and Sobolev spaces introduced above. Since $\PR f$ is time independent, 
we refer to $\PR f$ as the \emph{steady-state} part of $f$. We refer to $\oPR f$ as the \emph{purely oscillatory} part of $f$.
By continuity, $\PR$ and $\oPR$ extend to bounded operators on 
$\LRper{p}\bp{\R; E(\Omega)}$ and $\WSRper{k}{p}\bp{\R; E(\Omega)}$.

\section{Linear Problem}\label{lin}

The linear equations \eqref{LinBCD} and \eqref{LinBCN} can be decomposed into a time-periodic Kuznetsov equation
coupled with a time-periodic heat equation. Based on this observation, we obtain the following linear theory:

\begin{thm}\label{homeo}
Assume that $\Omega\subset\R^3$ is a bounded domain with a $C^{4}$-smooth boundary. 
Let $p\in (1, \infty)$. Then
\begin{align*}
&\operatorname{A_D}\colon\oPR\PS{p}(\R\times\Omega)\to \\
&\qquad\qquad\qquad \oPR\LRper{p}\bp{\R; \LR{p}(\Omega)}\times\oPR \TRD{1}(\R\times\partial\Omega)\times\oPR \TRD{2}(\R\times\partial\Omega), \\
&\operatorname{A_D}(\uvel) := \left(\ADdiffopru, \operatorname{Tr}_D\uvel\right)
\end{align*}
and 
\begin{align*}
&\operatorname{A_N}\colon \oPR\PS{p}(\R\times\Omega) \to\\
&\qquad\qquad\qquad \oPR\LRper{p}\bp{\R; \LR{p}(\Omega)}\times\oPR \TRN{1}(\R\times\partial\Omega)\times\oPR \TRN{2}(\R\times\partial\Omega), \\
&\operatorname{A_N}(\uvel) := \left(\ADdiffopru, \operatorname{Tr}_N\uvel\right), 
\end{align*}
are homeomorphisms.
\end{thm}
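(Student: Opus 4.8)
The plan is to use the factorization $\ADdiffopr=(a\Delta-\partial_t)\circ\bp{\partial_t^2-c^2\Delta-b\partial_t\Delta}$, i.e.\ the time-periodic heat operator composed with the time-periodic Kuznetsov operator, and to assemble the inverse of $\operatorname{A_D}$ (resp.\ $\operatorname{A_N}$) out of the solution operators of these two sub-problems, which are supplied by \cite{KyedSauer_Heat} and \cite{CelikKyed_nweqwd}. I treat the Dirichlet operator $\operatorname{A_D}$ in detail; the Neumann case is word-for-word identical with the normal-derivative trace in place of $\TDN$. That $\operatorname{A_D}$ is well-defined, bounded, and maps into the displayed purely oscillatory target space is immediate: expanding $\ADdiffopru$ and invoking the embedding \eqref{EmbeddingLemEmb1} places every resulting term in $\LRper{p}(\R;\LR{p}(\Omega))$, the trace component is controlled by Lemma \ref{TraceSpaceLemma}, and $\oPR$ commutes with $\partial_t$, $\Delta$ and with the trace maps.

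For $\uvel\in\oPR\PS{p}(\R\times\Omega)$ introduce the intermediate field $\wvel:=\partial_t^2\uvel-c^2\Delta\uvel-b\partial_t\Delta\uvel$. Combining $\uvel\in\WSRper{3}{p}(\R;\LR{p}(\Omega))\cap\WSRper{1}{p}(\R;\WSR{4}{p}(\Omega))$ with \eqref{EmbeddingLemEmb1} shows $\wvel\in\oPR\W^{p}$, where $\W^{p}:=\WSRper{1}{p}(\R;\LR{p}(\Omega))\cap\LRper{p}(\R;\WSR{2}{p}(\Omega))$ is precisely the maximal-regularity space of the time-periodic heat equation, and of course $\ADdiffopru=(a\Delta-\partial_t)\wvel$. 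Writing $(g,h):=\TD\uvel$ and restricting $\Delta\uvel$ to the boundary yields the derived datum $\TDN\wvel=\partial_t^2 g-c^2 h-b\partial_t h=:\psi(g,h)$. The crucial point is to show that $\psi$ maps $\oPR\TRD{1}\times\oPR\TRD{2}$ boundedly into the $\TDN$-trace space of $\W^{p}$, namely $T_{\mathrm{heat}}:=\WSRper{1-\frac{1}{2p}}{p}\bp{\R;\LR{p}(\partial\Omega)}\cap\LRper{p}\bp{\R;\WSR{2-\frac{1}{p}}{p}(\partial\Omega)}$ (identified via \cite{TriebelInterpolation} exactly as in the proof of Lemma \ref{TraceSpaceLemma}): composing the right inverse $\TDNrinv\colon\TRD{1}\to\PS{p}$ with \eqref{EmbeddingLemEmb1} and with $\TDN$ (which acts only in the spatial variable) gives the non-obvious embedding $\TRD{1}\embeds\WSRper{2}{p}\bp{\R;\WSR{2-\frac{1}{p}}{p}(\partial\Omega)}$, whence $\partial_t^2 g\in T_{\mathrm{heat}}$, while $h,\partial_t h\in T_{\mathrm{heat}}$ follows straight from the definition of $\TRD{2}$. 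This trace bookkeeping is the main obstacle in the proof.

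Let $\calh$ be the inverse of $\wvel\mapsto\bp{(a\Delta-\partial_t)\wvel,\TDN\wvel}$ — a homeomorphism of $\oPR\W^{p}$ onto $\oPR\LRper{p}(\R;\LR{p}(\Omega))\times\oPR T_{\mathrm{heat}}$ by the time-periodic heat theory of \cite{KyedSauer_Heat} — and $\calk$ the inverse of $\uvel\mapsto\bp{\partial_t^2\uvel-c^2\Delta\uvel-b\partial_t\Delta\uvel,\TDN\uvel}$ — a homeomorphism of $\oPR\PS{p}(\R\times\Omega)$ onto $\oPR\W^{p}\times\oPR\TRD{1}$ by the time-periodic Kuznetsov theory of \cite{CelikKyed_nweqwd}. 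Define $\operatorname{B}(f,g,h):=\calk\bp{\calh\bp{f,\psi(g,h)},g}$, which is bounded by the previous paragraph. It is a \emph{left} inverse of $\operatorname{A_D}$ by \emph{uniqueness} in the two sub-problems: if $(f,g,h)=\operatorname{A_D}\uvel$, then the intermediate field of $\uvel$ solves the heat problem with data $\bp{f,\psi(g,h)}$, hence equals $\calh\bp{f,\psi(g,h)}$, and then $\uvel$ solves the Kuznetsov problem with that field and boundary datum $g$, hence equals $\operatorname{B}(f,g,h)$.

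Showing $\operatorname{B}$ is a \emph{right} inverse is the only point that requires a genuine computation. Put $\wvel:=\calh\bp{f,\psi(g,h)}$ and $\uvel:=\calk(\wvel,g)$; then $\ADdiffopru=(a\Delta-\partial_t)\wvel=f$ and $\TDN\uvel=g$ by construction, and it remains to verify $\Delta\uvel_{|\R\times\partial\Omega}=h$. Restricting the identity $c^2\Delta\uvel+b\partial_t\Delta\uvel=\partial_t^2\uvel-\wvel$ to $\R\times\partial\Omega$ and inserting $\TDN\uvel=g$, $\TDN\wvel=\partial_t^2 g-c^2 h-b\partial_t h$ gives, after cancelling $\partial_t^2 g$, the relation $c^2\phi+b\partial_t\phi=0$ on $\R\times\partial\Omega$ with $\phi:=\Delta\uvel_{|\R\times\partial\Omega}-h$. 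Since $\phi$ is purely oscillatory and $c^2+b\partial_t$ is invertible on $\oPR\LRper{p}(\R;\LR{p}(\partial\Omega))$ — its inverse being the Fourier multiplier with symbol $(c^2+i b k\perf)^{-1}$, which is bounded because $\snorm{c^2+i b k\perf}\ge c^2>0$ for every $k\in\Z$ — we get $\phi=0$, hence $\operatorname{A_D}\uvel=(f,g,h)$. Thus $\operatorname{A_D}$ is a homeomorphism with inverse $\operatorname{B}$. For $\operatorname{A_N}$ one argues identically: the intermediate field satisfies $\partial_\nu\wvel_{|\R\times\partial\Omega}=\partial_t^2 g-c^2 h-b\partial_t h$ with $g=\partial_\nu\uvel_{|\R\times\partial\Omega}$, one uses the Neumann heat and Neumann Kuznetsov solution operators on the purely oscillatory subspace (where the mean-value obstructions disappear), and the second Neumann condition is recovered by applying the normal-derivative trace to $c^2\Delta\uvel+b\partial_t\Delta\uvel=\partial_t^2\uvel-\wvel$ and again inverting $c^2+b\partial_t$.
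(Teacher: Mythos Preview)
Your proof is correct, but you invert the order of the two sub-problems compared to the paper. You set $w=\partial_t^2\uvel-c^2\Delta\uvel-b\partial_t\Delta\uvel$, solve the heat equation for $w$, and then the Kuznetsov equation for $\uvel$; the paper instead sets $v=a\Delta\uvel-\partial_t\uvel$, solves the Kuznetsov equation for $v$ first with boundary datum $v_{|\R\times\partial\Omega}=ah-\partial_t g$, and then the heat equation for $\uvel$ with $\uvel_{|\R\times\partial\Omega}=g$. This reversal has two practical consequences. First, the paper's boundary datum $ah-\partial_t g$ lands directly in $\oPR\TRD{2}$ via the embedding \eqref{EmbeddingLemEmb3} (only one time derivative of $g$ is needed), so \cite[Theorem 3.1]{CelikKyed_nweqwd} applies at its base regularity level; the higher-regularity lifting is then required only for the heat equation, where it is entirely standard. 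In your ordering the Kuznetsov step must be invoked at the lifted level $\oPR\PS{p}\to\oPR\W^{p}\times\oPR\TRD{1}$, which is not the statement of \cite{CelikKyed_nweqwd} and needs its own bootstrap argument that you pass over. Second, the paper recovers the second boundary condition by a one-line computation,
\[
\TDN\Delta\uvel=\tfrac{1}{a}\TDN(\partial_t\uvel+v)=\tfrac{1}{a}\bp{\partial_t g+ah-\partial_t g}=h,
\]
with no operator inversion; your route forces you to solve $(c^2+b\partial_t)\phi=0$ on the boundary. Both arguments are valid, but the paper's ordering is the more economical one.
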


\begin{proof}
On the strength of the embedding \eqref{EmbeddingLemEmb1}, we observe that $\Delta$ is a bounded operator
\begin{align*}
\Delta: \oPR\PS{p}(\R\times\Omega)
\ra \oPR\WSRper{2}{p}\bp{\R; \LR{p}(\Omega)}\cap\oPR\WSRper{1}{p}\bp{\R;\WSR{2}{p}(\Omega)}.
\end{align*}
Together with the continuity of the trace operators established in  
Lemma \ref{TraceSpaceLemma}, this implies that the operators $\operatorname{A_D}$ and $\operatorname{A_N}$ are well-defined as bounded 
operators in the given setting. We start by showing that the operators are surjective. We concentrate on $\operatorname{A_D}$, as the operator  
$\operatorname{A_N}$ can be treated in a completely similar manner. To this end, let 
\begin{align*}
(f,g,h)\in 
\oPR\LRper{p}\bp{\R; \LR{p}(\Omega)}\times
\oPR \TRD{1}(\R\times\partial\Omega)\times\oPR \TRD{2}(\R\times\partial\Omega).
\end{align*}
Consider the coupled equations
\begin{align}\label{damp_waveD}
\begin{pdeq}
\partial_{t}^2 v - c^2\Delta v - b\partial_t\Delta v &= f && \tin\R\times\Omega, \\
v &= ah - \partial_t g && \ton\R\times\partial\Omega,
\end{pdeq}
\end{align}
and
\begin{align}\label{HeatD}
\begin{pdeq}
a\Delta\uvel - \partial_t\uvel &= v && \tin\R\times\Omega, \\
\uvel &= g && \ton\R\times\partial\Omega.
\end{pdeq}
\end{align}
We recognize \eqref{damp_waveD} as the time-periodic Kuznetsov equation, which was studied in \cite{CelikKyed_nweqwd}, and
\eqref{HeatD} as the time-periodic heat equation, which was studied in \cite{KyedSauer_Heat}.
Recalling the embedding \eqref{EmbeddingLemEmb3}, we see that $\partial_t$ is bounded as an operator
\begin{equation}\label{homeoThmPartialtMappingProp1}
\partial_t:\oPR \TRD{1}(\R\times\partial\Omega) \ra \oPR \TRD{2}(\R\times\partial\Omega),
\end{equation}
whence $ah - \partial_t g\in \oPR \TRD{2}(\R\times\partial\Omega)$. Consequently, we obtain directly from
\cite[Theorem 3.1]{CelikKyed_nweqwd} existence of a unique solution 
\begin{equation*}
v\in\oPR\WSRper{2}{p}\bp{\R; \LR{p}(\Omega)}\cap\oPR\WSRper{1}{p}\bp{\R; \WSR{2}{p}(\Omega)}
\end{equation*}
to \eqref{damp_waveD}. We now turn to \eqref{HeatD}. 
From \cite[Theorem 2.1]{KyedSauer_Heat} and a standard regularity and lifting argument, based on the mapping property 
of the trace operator $\TDN$ established in Lemma \ref{TraceSpaceLemma}, we obtain a unique solution $\uvel\in\oPR\PS{p}(\R\times\Omega)$ to \eqref{HeatD}. Recalling the embedding 
\eqref{EmbeddingLemEmb2}, we see that $\partial_t$ is bounded as an operator
\begin{align}\label{homeoThmPartialtMappingProp2}
\partial_t: \oPR\PS{p}(\R\times\Omega) \ra \oPR\WSRper{2}{p}\bp{\R; \LR{p}(\Omega)}\cap\oPR\WSRper{1}{p}\bp{\R;\WSR{2}{p}(\Omega)}.
\end{align}
Since the operators $\partial_t$ and $\TDN$ commute on spaces of smooth functions, it follows 
from \eqref{homeoThmPartialtMappingProp1}, \eqref{homeoThmPartialtMappingProp2} and the mapping property of the trace operator $\TDN$ asserted in Lemma \ref{TraceSpaceLemma} that they commute in the setting
\begin{equation*}
\partial_t\circ\TDN = \TDN\circ\partial_t :
\oPR\PS{p}(\R\times\Omega) \ra\oPR \TRD{2}(\R\times\partial\Omega).
\end{equation*} 
We thus deduce from \eqref{HeatD} and the boundary condition in \eqref{damp_waveD} that
\begin{align*}
\TD\uvel = \bp{g,\TDN\np{\Delta \uvel}} 
= \Bp{g,\TDN\bp{ \frac{1}{a}\bb{\partial_t\uvel+v}}} 
= \bp{g,\frac{1}{a}\partial_t g+\frac{1}{a}v} = \bp{g,h}. 
\end{align*}
It follows that $\operatorname{A_D}(\uvel)=(f,g,h)$, and we conclude that $\operatorname{A_D}$ is surjective.
To show that $\operatorname{A_D}$ is injective, consider $\uvel\in\oPR\PS{p}(\R\times\Omega)$ with $\operatorname{A_D}(\uvel)=(0,0,0)$.
Unique solvability of the time-periodic Kuznetsov equation \cite[Theorem 3.1]{CelikKyed_nweqwd} implies 
$a\Delta\uvel - \partial_t\uvel=0$. In turn,  
unique solvability of the time-periodic heat equation \cite[Theorem 2.1]{KyedSauer_Heat} implies $\uvel=0$.
Consequently, $\operatorname{A_D}$ is injective. By the open mapping theorem, $\operatorname{A_D}$ is a homeomorphism.
\end{proof}

Next, we recall some standard results for the bi-Laplacian Dirichlet problem
\begin{align}\label{BCDS}
\begin{pdeq}
-ac^2\Delta^2\uvel &= f && \tin\Omega, \\
\bp{\uvel, \Delta\uvel} &= \bp{g, h} && \ton\partial\Omega,
\end{pdeq}
\end{align}
and the corresponding Neumann problem
\begin{align}\label{BCNS}
\begin{pdeq}
-ac^2\Delta^2\uvel &= f && \tin\Omega, \\
\bp{\partial_\nu\uvel, \partial_\nu{\Delta\uvel}} &= \bp{g, h} && \ton\partial\Omega.
\end{pdeq}
\end{align}

\begin{lem}\label{DSBD}
Let $\Omega$ and $p$ be as in Theorem \ref{homeo}. For any $f\in\LR{p}\np{\Omega}$, $g\in\WSR{4-\frac{1}{p}}{p}\np{\partial\Omega}$ and $h\in\WSR{2-\frac{1}{p}}{p}\np{\partial\Omega}$ there exists a unique solution $\uvel\in\WSR{4}{p}\np{\Omega}$ to \eqref{BCDS} satisfying
\begin{align}\label{reg_Dirichlet_BDS}
\norm{\uvel}_{4, p}\leq \Cc[EstSsBdD]{C}\left(\norm{f}_p + \norm{g}_{4-\frac{1}{p}, p} + \norm{h}_{2-\frac{1}{p}, p}\right),
\end{align}
with $\const{EstSsBdD} = \const{EstSsBdD}(p, \Omega) > 0$.
\end{lem}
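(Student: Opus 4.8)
The plan is to reduce the fourth-order boundary value problem \eqref{BCDS} to two successive second-order Poisson problems and to invoke classical elliptic $\LR{p}$-theory for each. Since $ac^2>0$, I would set $v:=\Delta\uvel$, so that the equation $-ac^2\Delta^2\uvel=f$ in $\Omega$ becomes $\Delta v=-\tfrac{1}{ac^2}f$ in $\Omega$, while the second boundary condition $\Delta\uvel=h$ on $\partial\Omega$ reads $v=h$ on $\partial\Omega$. First I would solve the Dirichlet problem
\[
\Delta v=-\tfrac{1}{ac^2}f \ \tin\Omega,\qquad v=h \ \ton\partial\Omega .
\]
As $f\in\LR{p}(\Omega)$ and $h\in\WSR{2-\frac{1}{p}}{p}(\partial\Omega)$, the classical $\LR{p}$-theory for the Dirichlet problem of the Laplacian on a bounded domain with $\CR{2}$-smooth (a fortiori $\CR{4}$-smooth) boundary produces a unique $v\in\WSR{2}{p}(\Omega)$ together with the estimate $\norm{v}_{2,p}\leq C\bp{\norm{f}_p+\norm{h}_{2-\frac{1}{p},p}}$.

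Next I would solve
\[
\Delta\uvel=v \ \tin\Omega,\qquad \uvel=g \ \ton\partial\Omega .
\]
Here $v\in\WSR{2}{p}(\Omega)$ and $g\in\WSR{4-\frac{1}{p}}{p}(\partial\Omega)$, and since $\partial\Omega$ is $\CR{4}$-smooth the same elliptic theory, shifted up by two derivatives, yields a unique $\uvel\in\WSR{4}{p}(\Omega)$ with $\norm{\uvel}_{4,p}\leq C\bp{\norm{v}_{2,p}+\norm{g}_{4-\frac{1}{p},p}}$. Chaining the two inequalities gives \eqref{reg_Dirichlet_BDS}. That this $\uvel$ actually solves \eqref{BCDS} is immediate: its trace on $\partial\Omega$ is $g$, the function $\Delta\uvel=v$ has trace $h$, and $-ac^2\Delta^2\uvel=-ac^2\Delta v=f$ in $\Omega$.

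For uniqueness I would run the same two-step argument in reverse: if $f=g=h=0$, then $v:=\Delta\uvel$ solves $\Delta v=0$ in $\Omega$ with $v=0$ on $\partial\Omega$, hence $v\equiv0$ by uniqueness for the Poisson--Dirichlet problem; consequently $\Delta\uvel=0$ in $\Omega$ with $\uvel=0$ on $\partial\Omega$, so $\uvel\equiv0$. I do not expect any genuine obstacle here; the only points requiring a modicum of care are that the boundary data are prescribed in the Sobolev--Slobodecki\u{\i} trace spaces $\WSR{2-\frac{1}{p}}{p}(\partial\Omega)$ and $\WSR{4-\frac{1}{p}}{p}(\partial\Omega)$, which are precisely the trace classes for which the inhomogeneous Dirichlet problem for $\Delta$ is well posed in $\WSR{2}{p}(\Omega)$ and $\WSR{4}{p}(\Omega)$, respectively, and that the assumed $\CR{4}$-regularity of $\partial\Omega$ is exactly what licenses the higher-order $\WSR{4}{p}$-estimate in the second step.
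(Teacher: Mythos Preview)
Your proof is correct and follows essentially the same approach as the paper, which also decomposes \eqref{BCDS} into two successive Poisson--Dirichlet problems and invokes standard elliptic $\LR{p}$-theory; the only cosmetic difference is that the paper absorbs the factor $ac^2$ into the boundary datum for $v$ rather than into the right-hand side. Your version is in fact more detailed, since it spells out the estimates, the uniqueness argument, and the role of the $\CR{4}$-regularity assumption.
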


\begin{proof}
Considering the coupled equations 
\begin{align*}
\begin{pdeq}
\Delta v &= f && \tin\Omega, \\
v &= -ac^2h && \ton\partial\Omega,
\end{pdeq}
\qquad
\begin{pdeq}
-ac^2\Delta\uvel &= v && \tin\Omega, \\
\uvel &= g && \ton\partial\Omega,
\end{pdeq}
\end{align*}
one easily obtains the assertion of the lemma from standard theory for the Laplace equation with Dirichlet boundary conditions.
\end{proof}

\begin{lem}\label{NSBD}
Let $\Omega$ and $p$ be as in Theorem \ref{homeo}. For any $f\in\LR{p}(\Omega)$, $g\in\WSR{3-\frac{1}{p}}{p}(\partial\Omega)$ and $h\in\WSR{1-\frac{1}{p}}{p}(\partial\Omega)$ satisfying
\begin{align*}
\int_\Omega f \,\dx + ac^2\int_{\partial\Omega} h \,\dS = 0
\end{align*}
there exists a unique solution $\uvel\in\WSR{4}{p}(\Omega)$ to \eqref{BCNS} satisfying
\begin{align*}
\norm{\uvel}_{4, p}\leq \Cc[EstSsBdN]{C}\left(\norm{f}_p + \norm{g}_{3-\frac{1}{p}, p} + \norm{h}_{1-\frac{1}{p}, p}\right),
\end{align*}
with $\const{EstSsBdN} = \const{EstSsBdN}(p, \Omega) > 0$.
\end{lem}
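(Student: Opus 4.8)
The plan is to mimic the proof of Lemma \ref{DSBD} and factor the fourth-order problem \eqref{BCNS} into two coupled second-order Neumann problems for the Laplacian, the essential new difficulty being that the Neumann Laplacian has the constants as its kernel and is only solvable subject to a compatibility condition. Concretely, setting $v:=-ac^2\Delta\uvel$, problem \eqref{BCNS} is equivalent to the coupled system
\begin{align*}
\begin{pdeq}
\Delta v &= f && \tin\Omega, \\
\partial_\nu v &= -ac^2 h && \ton\partial\Omega,
\end{pdeq}
\qquad
\begin{pdeq}
-ac^2\Delta\uvel &= v && \tin\Omega, \\
\partial_\nu\uvel &= g && \ton\partial\Omega.
\end{pdeq}
\end{align*}

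First I would solve the left-hand problem. It is a Neumann problem for the Laplace equation with interior datum $f\in\LR{p}(\Omega)$ and boundary datum $-ac^2h\in\WSR{1-\frac{1}{p}}{p}(\partial\Omega)$; by the standard $\LR{p}$-theory for the Laplace equation with Neumann boundary conditions it admits a solution $v\in\WSR{2}{p}(\Omega)$, unique up to an additive constant, precisely when $\int_\Omega f\,\dx=\int_{\partial\Omega}(-ac^2h)\,\dS$, that is, exactly under the hypothesis $\int_\Omega f\,\dx+ac^2\int_{\partial\Omega}h\,\dS=0$. I would then turn to the right-hand problem, again a Neumann problem for the Laplace equation, now with interior datum $-\frac{1}{ac^2}v\in\WSR{2}{p}(\Omega)$ and boundary datum $g\in\WSR{3-\frac{1}{p}}{p}(\partial\Omega)$; it is solvable in $\WSR{4}{p}(\Omega)$ if and only if $-\frac{1}{ac^2}\int_\Omega v\,\dx=\int_{\partial\Omega}g\,\dS$. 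The key point is that the first step leaves the additive constant of $v$ at our disposal, so I would fix it by prescribing $\int_\Omega v\,\dx=-ac^2\int_{\partial\Omega}g\,\dS$, which makes this second compatibility condition hold automatically. The standard theory then yields $\uvel\in\WSR{4}{p}(\Omega)$, unique up to an additive constant, which is removed by a normalisation such as $\int_\Omega\uvel\,\dx=0$; a direct computation confirms $-ac^2\Delta^2\uvel=\Delta v=f$, $\partial_\nu\uvel=g$ and $\partial_\nu\Delta\uvel=-\tfrac{1}{ac^2}\partial_\nu v=h$, so that this $\uvel$ indeed solves \eqref{BCNS}.

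For the a priori bound I would combine the two elliptic estimates $\norm{v}_{2, p}\le C\bp{\norm{f}_p+\norm{h}_{1-\frac{1}{p}, p}+\snorm{\int_\Omega v\,\dx}}$ and $\norm{\uvel}_{4, p}\le C\bp{\norm{v}_{2, p}+\norm{g}_{3-\frac{1}{p}, p}}$, with $C=C(p,\Omega)$, and observe that the chosen normalisation gives $\snorm{\int_\Omega v\,\dx}=ac^2\snorm{\int_{\partial\Omega}g\,\dS}\le C\norm{g}_{3-\frac{1}{p}, p}$, which closes the chain and produces the asserted inequality. The step I expect to be the main obstacle, and the only genuine departure from the Dirichlet situation of Lemma \ref{DSBD} where both auxiliary problems are uniquely solvable, is the bookkeeping of the two compatibility conditions against the two undetermined constants: the hypothesis on $f$, $g$, $h$ is consumed to make the first Neumann problem solvable, solvability of the second has then to be engineered from the free constant of the first, and uniqueness for \eqref{BCNS} accordingly holds only modulo constants, which is why a normalisation must be imposed for the statement and its estimate to be literally correct.
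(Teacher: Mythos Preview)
Your approach is exactly the one the paper takes: it simply states that the assertion follows by decomposing \eqref{BCNS} into two Laplace equations with Neumann boundary conditions, as in the proof of Lemma \ref{DSBD}. Your careful bookkeeping of the two compatibility conditions against the two free additive constants, and your observation that uniqueness (and hence the estimate) holds only modulo constants unless a normalisation is imposed, correctly fill in details the paper leaves implicit.
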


\begin{proof}
As in the proof of Lemma \ref{DSBD}, the assertion follows by decomposing
\eqref{BCNS} into two Laplace equations with Neumann boundary conditions.
\end{proof}

\section{Proof of Main Theorems}\label{Main}

We shall now prove Theorem \ref{max_regBCD} and \ref{max_regBCN}. For this purpose, we employ a fixed-point argument based on the estimates established for the linearized systems \eqref{LinBCD} and \eqref{LinBCN} in the previous section. 

The nonlinear terms in \eqref{BCD} and \eqref{BCN} can be estimated as follows:

\begin{lem}\label{nonlin}
Let $\Omega$ and $p$ be as in Theorem \ref{max_regBCD}. Then 
\begin{align*}
\norm{\partial_t v \, \partial_t^3\uvel}_p + \norm{\partial_t^2 v\, \partial_t^2\uvel}_p + \norm{\partial_t\grad v\cdot\partial_t\grad\uvel}_p + \norm{\grad v\cdot\partial_t^2\grad\uvel}_p \leq \Cc[NLE]{C}\norm{v}_{\PS{p}}\norm{\uvel}_{\PS{p}}
\end{align*}
holds for any $\uvel, v\in\PS{p}(\R\times\Omega)$. 
\end{lem}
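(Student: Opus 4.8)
The plan is to estimate each of the four bilinear terms by a Hölder split of the product into two factors, then absorb each factor into a suitable mixed Lebesgue norm controlled by the embeddings already recorded, in particular Lemma \ref{EmbeddingLemma} (to pass from $\PS{p}$ to $\WSRper{2}{p}(\R;\WSR{2}{p}(\Omega))$) and Lemma \ref{SobEmbeddingThm} (the anisotropic Sobolev embeddings for $\WSRper{1}{p}(\R;\LR{p})\cap\LRper{p}(\R;\WSR{2}{p})$). Throughout, the exponent restriction $p>\tfrac52$ is what buys enough room: with $n=3$ it makes $\WSR{2}{p}(\Omega)\embeds\LR{\infty}(\Omega)$ (indeed $\WSR{2}{p}\embeds C^{0,\gamma}$ for $p>3/2$), and more to the point it gives a genuine gain in the time–space scaling so that the two factors in each product land in dual pairs of exponents whose reciprocals sum to $1/p$.

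First I would fix one generic term, say $\norm{\partial_t v\,\partial_t^3\uvel}_p$. Writing $\norm{\cdot}_p$ for $\norm{\cdot}_{\LRper{p}(\R;\LR{p}(\Omega))}$, I apply Hölder in the spatial variable and then in time to get
\begin{align*}
\norm{\partial_t v\,\partial_t^3\uvel}_{\LRper{p}(\R;\LR{p}(\Omega))}
\leq \norm{\partial_t v}_{\LRper{r_0}(\R;\LR{p_0}(\Omega))}\,
\norm{\partial_t^3\uvel}_{\LRper{r_1}(\R;\LR{p_1}(\Omega))},
\end{align*}
with $\tfrac1{r_0}+\tfrac1{r_1}=\tfrac1p$ and $\tfrac1{p_0}+\tfrac1{p_1}=\tfrac1p$. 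For the $\uvel$-factor one has $\partial_t^3\uvel\in\LRper{p}(\R;\LR{p}(\Omega))$ directly from the definition of $\PS{p}$, so take $r_1=p_1=p$ and hence $r_0=p_0=\infty$; it then remains to bound $\norm{\partial_t v}_{\LRper{\infty}(\R;\LR{\infty}(\Omega))}$. Since $\partial_t v\in\WSRper{2}{p}(\R;\LR{p}(\Omega))\cap\LRper{p}(\R;\WSR{4}{p}(\Omega))$ by the definition of $\PS{p}$ applied to $v$, and $p>\tfrac52$, a Sobolev embedding in time (from $\WSRper{2}{p}\embeds C^0$ when $2p>1$) combined with $\WSR{4}{p}(\Omega)\embeds\LR{\infty}(\Omega)$ — or, more economically, one application of Lemma \ref{SobEmbeddingThm} to the function $\partial_t v$ with $q=p$, $\alpha$ chosen so that $\alpha q>2$ (time) and $(2-\alpha)q>n=3$ (space), which is exactly solvable for $p>\tfrac52$ — yields $\norm{\partial_t v}_{\infty,\infty}\le C\norm{v}_{\PS{p}}$. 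This handles the first term. The second term $\partial_t^2 v\,\partial_t^2\uvel$ is symmetric: put one factor in $\LRper{p}(\R;\LR{p})$ (available from $\PS{p}$) and the other in $\LRper{\infty}(\R;\LR{\infty})$, noting $\partial_t^2 v$ and $\partial_t^2\uvel$ both lie in $\WSRper{1}{p}(\R;\LR{p})\cap\LRper{p}(\R;\WSR{2}{p})$ by Lemma \ref{EmbeddingLemma} (embedding \eqref{EmbeddingLemEmb1}), whence $\LRper{\infty}(\R;\LR{\infty})$ control follows again from Lemma \ref{SobEmbeddingThm} once $p>\tfrac52$.

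For the two gradient terms, the same scheme applies after noting that $\grad v$ and $\grad\uvel$ carry one extra spatial derivative. For $\grad v\cdot\partial_t^2\grad\uvel$ I would put $\partial_t^2\grad\uvel$ in $\LRper{p}(\R;\LR{p}(\Omega))$ — which holds because $\partial_t^2\uvel\in\LRper{p}(\R;\WSR{2}{p}(\Omega))\embeds\LRper{p}(\R;\WSR{1}{p}(\Omega))$ via \eqref{EmbeddingLemEmb1} — and estimate $\grad v$ in $\LRper{\infty}(\R;\LR{\infty}(\Omega))$, for which I invoke the gradient part of Lemma \ref{SobEmbeddingThm} (the $\norm{\grad\uvel}_{\LRper{r_1}(\R;\LR{p_1})}$ term, applied with $v$ in place of $\uvel$ and $r_1=p_1=\infty$) applied to $v\in\WSRper{1}{p}(\R;\LR{p})\cap\LRper{p}(\R;\WSR{2}{p})$; the condition $(1-\beta)q>n=3$ together with $\beta q>2$ is again solvable precisely when $p>\tfrac52$. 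The term $\partial_t\grad v\cdot\partial_t\grad\uvel$ is symmetric and handled by splitting $\LR{p}\cdot\LR{\infty}$ with $\partial_t\grad\uvel\in\LRper{p}(\R;\LR{p})$ and $\partial_t\grad v$ controlled in $\LRper{\infty}(\R;\LR{\infty})$ by applying Lemma \ref{SobEmbeddingThm} to $\partial_t v\in\WSRper{1}{p}(\R;\LR{p})\cap\LRper{p}(\R;\WSR{2}{p})$ (this last intersection again coming from \eqref{EmbeddingLemEmb1}). Summing the four estimates and bounding every norm on the right by $\norm{v}_{\PS{p}}\norm{\uvel}_{\PS{p}}$ gives the claim with $\const{NLE}$ depending only on $p$ and $\Omega$.

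The only delicate point — the "main obstacle" — is the bookkeeping of exponents: one must check that in each of the four products the two mixed-norm exponent pairs $(r_0,p_0)$, $(r_1,p_1)$ that one wants to use are genuinely admissible in the hypotheses of Lemma \ref{SobEmbeddingThm} for the regularity actually available (which differs slightly between the "pure time-derivative" factors, sitting in $\WSRper{2}{p}(\R;\LR{p})$ after using the definition of $\PS{p}$, and the "one space derivative" factors, sitting in $\WSRper{1}{p}(\R;\WSR{2}{p})$-type spaces after invoking \eqref{EmbeddingLemEmb1}). In every case the slack is provided by $p>\tfrac52$, so the verification is routine but must be done term by term; there is no analytic difficulty beyond Hölder and the cited embeddings.
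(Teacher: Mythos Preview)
Your overall strategy (H\"older plus Lemma~\ref{SobEmbeddingThm}) is exactly right, and for the first two terms your argument matches the paper's. But for the two gradient terms there is a genuine arithmetic slip in the exponent bookkeeping. You claim that the conditions $\beta q>2$ and $(1-\beta)q>3$ (needed for $r_1=p_1=\infty$ in the gradient part of Lemma~\ref{SobEmbeddingThm}) are simultaneously solvable when $q=p>\tfrac52$. They are not: adding gives $q>5$. So invoking the $\beta$-estimate for $v$ (resp.\ $\partial_t v$) in $\WSRper{1}{p}(\R;\LR{p})\cap\LRper{p}(\R;\WSR{2}{p})$ does \emph{not} yield $\norm{\grad v}_{\infty,\infty}$ or $\norm{\partial_t\grad v}_{\infty,\infty}$ in the range $p\in(\tfrac52,5]$.

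The fix is cheap: do not use the gradient ($\beta$) part of Lemma~\ref{SobEmbeddingThm} on $v$ or $\partial_t v$; instead feed $\grad v$ (resp.\ $\grad\partial_t v$) \emph{itself} into the $\alpha$-part. The paper records precisely that $\grad w\in\WSRper{1}{p}(\R;\LR{p})\cap\LRper{p}(\R;\WSR{2}{p})$ for $w\in\PS{p}$, and via \eqref{EmbeddingLemEmb1} one also has $\grad\partial_t v\in\WSRper{1}{p}(\R;\LR{p})\cap\LRper{p}(\R;\WSR{2}{p})$. Then the $\alpha$-conditions $\alpha p>2$ and $(2-\alpha)p>3$ are solvable for $p>\tfrac52$, which gives the $\LR{\infty}(\R;\LR{\infty}(\Omega))$ control you want. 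With this correction your $\LR{p}\cdot\LR{\infty}$ split for all four terms goes through and is in fact a bit simpler than the paper's proof, which instead uses the mixed split $\norm{\grad v}_{\LRper{p}(\R;\LR{\infty})}\cdot\norm{\partial_t^2\grad\uvel}_{\LRper{\infty}(\R;\LR{p})}$ for the fourth term and a genuinely asymmetric H\"older pairing $\LRper{10p/(10-p)}\bp{\R;\LR{15p/(15-4p)}}\cdot\LRper{10}\bp{\R;\LR{15/4}}$ for the third.
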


\begin{proof}
For any function $\wvel\in\PS{p}(\R\times\Omega)$ we clearly have
\begin{align*}
\norm{\partial_t\wvel, \partial_t^2\wvel, \grad\wvel}_{\WSRper{1}{p}(\R; \LR{p}(\Omega)) \cap \LRper{p}(\R; \WSR{2}{p}(\Omega))} \leq \norm{\wvel}_{\PS{p}(\R\times\Omega)}.
\end{align*}
Thus, for $p\in (\frac{5}{2}, \infty)$ Lemma \ref{SobEmbeddingThm} yields 
\begin{align*}
\alpha &= \frac{4}{5}:\qquad \norm{\partial_t v}_{\infty} + \norm{\partial_t^2 v}_{\infty} \leq \Cc[alpha0]{c}\norm{v}_{\PS{p}}, \\
\alpha &= 0:\qquad \norm{\grad v}_{\LRper{p}\np{\R; \LR{\infty}(\Omega)}} \leq \Cc[alpha1]{c}\norm{v}_{\PS{p}},
\end{align*}
and for $p\in(\frac{5}{2},\frac{15}{4})$
\begin{align}\label{nonlinEmb2}
\begin{aligned}
\beta &= 1:\qquad \norm{\partial_t^2\grad\uvel}_{\LRper{\infty}\np{\R; \LR{p}(\Omega)}} \leq \Cc[beta0]{c}\norm{\uvel}_{\PS{p}}, \\
\beta &= \frac{1}{5}:\qquad \norm{\partial_t\grad v}_{\LRper{\frac{10p}{10-p}}\np{\R; \LR{\frac{15p}{15-4p}}(\Omega)}} \leq \Cc[beta1]{c}\norm{v}_{\PS{p}}, \\
\beta &= \frac{3}{5}: \qquad\norm{\partial_t\grad\uvel}_{\LRper{10}\np{\R; \LR{\frac{15}{4}}(\Omega)}} \leq \Cc[beta2]{c}\norm{\uvel}_{\PS{p}}.
\end{aligned}
\end{align}
Utilizing H\"older's inequality, we can therefore deduce
\begin{align*}
&\norm{\grad v\cdot\partial_t^2\grad\uvel}_p + \norm{\partial_t v\,\partial_t^3\uvel}_p + \norm{\partial_t^2 v\,\partial_t^2\uvel}_p + \norm{\partial_t\grad v\cdot\partial_t\grad\uvel}_p \\
&\quad\leq \norm{\grad v}_{\LRper{p}\np{\R; \LR{\infty}(\Omega)}}\norm{\partial_t^2\grad\uvel}_{\LRper{\infty}\np{\R; \LR{p}(\Omega)}} + \norm{\partial_t v}_{\infty}\norm{\partial_t^3\uvel}_{p} \\ 
&\quad\quad + \norm{\partial_t^2 v}_{\infty}\norm{\partial_t^2\uvel}_{p} + \norm{\partial_t\grad v}_{\LRper{\frac{10p}{10-p}}\np{\R; \LR{\frac{15p}{15-4p}}(\Omega)}}\norm{\partial_t\grad\uvel}_{\LRper{10}\np{\R; \LR{\frac{15}{4}}(\Omega)}} \\
&\quad\leq \const{NLE}\norm{v}_{\PS{p}}\norm{\uvel}_{\PS{p}},
\end{align*}
with $\const{NLE} = 2\const{alpha0} + \const{alpha1}\const{beta0} + \const{beta1}\const{beta2}$. 
The estimates remain true for $p\in[\frac{15}{4},\infty)$, in which case even stronger embeddings in \eqref{nonlinEmb2} follow from Lemma \ref{SobEmbeddingThm}. 
\end{proof}

\begin{proof}[Proof of Theorem \ref{max_regBCD}]
Consider functions $f\in\LRper{p}(\R; \LR{p}\left(\Omega\right))$, $g\in\TRD{1}(\R\times\partial\Omega)$ and $h\in\TRD{2}(\R\times\partial\Omega)$ with 
$\norm{f}_p + \norm{g}_{\TRD{1}} + \norm{h}_{\TRD{2}} \leq \varepsilon$, where $\varepsilon$ is to be specified later. 
Put $f_s := \PR f$, $g_s := \PR g$, $h_s := \PR h$ and $f_p := \oPR f$, $g_p := \oPR g$, $h_p := \oPR h$.
We shall establish existence of a solution $u$ to \eqref{BCD} on the form $u = \us + \up$, where $\us\in\WSR{4}{p}(\Omega)$ is a solution to the elliptic problem \eqref{BCDS} with $f_s$ and $(g_s,h_s)$ as right-hand side, and $\up\in\oPR\PS{p}(\R\times\Omega)$ is a solution to the purely oscillatory problem
\begin{align}\label{periodic_BCD}
\begin{pdeq}
(a\Delta - \partial_t)(\partial_{t}^2\up-c^2\Delta\up - b\partial_t\Delta\up) \qquad\qquad\quad \\ 
- 2s\grad\us\cdot\grad\partial_t^2\up - \partial_t^2\bp{k\np{\partial_t\up}^2 + s\snorm{\grad\up}^2} &= f_p && \tin\R\times\Omega, \\
\bp{\up, \Delta\up} &= \bp{g_p, h_p} && \ton\R\times\partial\Omega.
\end{pdeq}
\end{align}
Lemma \ref{DSBD} yields a solution $\us\in\WSR{4}{p}(\Omega)$ to \eqref{BCDS} satisfying \eqref{reg_Dirichlet_BDS}. Sobolev's embedding theorem implies 
\begin{align}\label{SobolevSS}
\norm{\grad\us}_{\infty}\leq \Cc[Steady0]{c}\norm{\grad\us}_{3, p} \leq \const{Steady0}\norm{\us}_{4, p} \leq \Cc{c} \epsilon.
\end{align}
The solution to \eqref{periodic_BCD} shall be obtained as a fixed point of the mapping
\begin{align*}
&\mathcal{N}\colon\oPR\PS{p}(\R\times\Omega)\to\oPR\PS{p}(\R\times\Omega), \\
&\mathcal{N}\left(\up\right) := \operatorname{A_D}^{-1}\left(\partial_t^2\bp{k\np{\partial_t\up}^2 + s\snorm{\grad\up}^2} + 2s\grad\us\cdot\partial_t^2\grad\up + f_p, g_p, h_p\right)
\end{align*}
with $\operatorname{A_D}$ as in Theorem \ref{homeo}. For this purpose, let $\rho > 0$ and consider some $\up\in \oPR\PS{p}(\R\times\Omega)\cap B_\rho$. Since $\operatorname{A_D}$ is a homeomorphism, we conclude from Lemma \ref{nonlin} and \eqref{SobolevSS} the estimate
\begin{align*}
\norm{\mathcal{N}\left(\up\right)}_{\PS{p}} &\leq \norm{\operatorname{A_D}^{-1}}\Big(\norm{\partial_t \up\partial_t^3\up}_p + \norm{\partial_t^2 \up\partial_t^2\up}_p + \norm{\partial_t\grad \up\cdot\partial_t\grad\up}_p + \norm{f_p}_p \\
&\quad + \norm{\grad \up\cdot\partial_t^2\grad\up}_p + \norm{\grad\us\cdot\grad\partial_t^2\up}_p + \norm{g_p}_{\TRD{1}} + \norm{h_p}_{\TRD{2}}\Big) \\
&\leq \Cc{c}(\norm{\up}_{\PS{p}}^2 + \norm{\grad\us}_{\LR{\infty}(\Omega)}\norm{\grad\partial_t^2\up}_{\LRper{p}\np{\R; \LR{p}(\Omega)}} + \varepsilon) \\ 
&\leq \Cc{c}(\rho^2 + \varepsilon\rho + \varepsilon).
\end{align*}
Choosing $\rho:=\sqrt{\varepsilon}$ and $\epsilon$ sufficiently small, we have $\Cclast{c}\left(\rho^2 + \varepsilon\rho + \varepsilon\right) \leq \rho,$ \textit{i.e.}, $\mathcal{N}$ becomes a self-mapping on $B_\rho$. Furthermore,
\begin{align*}
\norm{\mathcal{N}&(\up) - \mathcal{N}(v_p)}_{\PS{p}} \leq \Cc{c}\norm{\operatorname{A_D}^{-1}}\Big(\norm{\partial_t\up\,\partial_t^3\up - 
\partial_t v_p\,\partial_t^3 v_p}_p \\ 
&\quad + \norm{\snorm{\grad\partial_t\up}^2 - \snorm{\grad\partial_t v_p}^2}_p + \norm{\grad\up\cdot\partial_t^2\grad\up - \grad v_p\cdot\partial_t^2\grad v_p}_p \\
&\quad + \norm{(\partial_t^2\up)^2 - (\partial_t^2 v_p)^2}_p + \norm{\grad\us\cdot\grad\partial_t^2\up - \grad\us\cdot\grad\partial_t^2 v_p}_p \Big) \\
&\leq \Cc{c}\Big(\norm{\partial_t\up\partial_t^3\left(\up - v_p\right)}_p + \norm{\partial_t^3 v_p\,\partial_t(\up - v_p)}_p + \norm{\grad\partial_t\up\cdot\grad\partial_t\left(\up - v_p\right)}_p \\
&\quad + \norm{\grad\partial_t v_p\cdot\grad\partial_t\left(\up - v_p\right)}_p + \norm{\grad\up\cdot\grad\partial_t^2\left(\up - v_p\right)}_p + \norm{\grad\partial_t^2 v_p\cdot\grad\left(\up - v_p\right)}_p \\ 
&\quad + \norm{\partial_t^2\up\,\partial_t^2\left(\up - v_p\right)}_p + \norm{\partial_t^2 v_p\,\partial_t^2\left(\up - v_p\right)}_p + \norm{\grad\us\cdot\grad\partial_t^2\left(\up - v_p\right)}_p\Big) \\
&\leq \Cc[Steady0]{c}\left(8\rho\norm{\up - v_p}_{\PS{p}} + \varepsilon\norm{\up - v_p}_{\PS{p}}\right) = \const{Steady0}(8\rho + \rho^2) \norm{\up - v_p}_{\PS{p}}.
\end{align*}
Therefore, if $\epsilon$ is sufficiently small $\mathcal{N}$ also becomes a contracting self-mapping. By the contraction mapping principle, existence of a fixed point for $\mathcal{N}$ follows. This concludes the proof.
\end{proof}

\begin{proof}[Proof of Theorem \ref{max_regBCN}]
Analogous to the proof of Theorem \ref{max_regBCD}.
\end{proof}

\bibliographystyle{plainurl}

\end{document}